\documentclass[13pt]{amsart}
\usepackage{amssymb,amsmath,amsthm,mathrsfs,verbatim,url, graphicx}
\usepackage{xcolor}
\oddsidemargin = 1.2cm \evensidemargin = 1.2cm \textwidth = 6.1in
\textheight =8.1in

\newcommand{\sm}[4]{\left(\begin{smallmatrix}#1&#2\\ #3&#4 \end{smallmatrix}
\right)}

\newtheorem{theorem}{Theorem}
\newtheorem{lemma}[theorem]{Lemma}
\newtheorem{corollary}[theorem]{Corollary}
\newtheorem{conjecture}{Conjecture}
\newtheorem{proposition}[theorem]{Proposition}
\newtheorem{definition}[theorem]{Definition}

\theoremstyle{remark}

\newtheorem*{remark}{Remark}

\newtheorem*{questions}{Questions}
\newtheorem*{example}{Example}
\numberwithin{theorem}{section} \numberwithin{equation}{section}

\newcommand{\Tr}{{\text {\rm Tr}}}
\newcommand{\Q}{\mathbb{Q}}

\newcommand{\Z}{\mathbb{Z}}
\newcommand{\N}{\mathbb{N}}
\newcommand{\SL}{{\text {\rm SL}}}
\newcommand{\GL}{{\text {\rm GL}}}

\newcommand{\F}{\mathbb{F}_p}

\newcommand{\Gal}{\rm Gal}
\newcommand{\X}{\mathcal{X}}
\newcommand{\Nr}{\rm Nr}

\author{Matija Kazalicki}
\address{Department of Mathematics, University of Zagreb, Bijeni\v cka cesta 30, 10000 Zagreb, Croatia}
\email{matija.kazalicki@math.hr}
\thanks{MK acknowledges support from the QuantiXLie Center of
Excellence}
\author{Daniel Kohen}
\address{Departamento de Matem\'atica, Facultad de Ciencias Exactas y Naturales, Universidad de Buenos Aires and IMAS, CONICET, Argentina}
\email{dkohen@dm.uba.ar}
\thanks{DK was partially supported by a CONICET doctoral fellowship}
\title{Supersingular zeros of divisor polynomials of elliptic curves of prime conductor}
\keywords{Divisor polynomial, supersingular elliptic curves}
\subjclass[2010]{Primary: 11G18 , Secondary:11F11}
\begin{document}

\maketitle

\begin{abstract}
For a prime number $p$ we study the zeros modulo $p$ of divisor polynomials of 
rational elliptic curves $E$ of conductor $p$. Ono \cite[p.~118]{Ono} made 
the 
observation that these zeros of are often $j$-invariants of supersingular 
elliptic curves over $\overline{\F}$. We show that these supersingular zeros are 
in bijection with zeros modulo $p$ of an associated quaternionic modular form 
$v_E$. 

This allows us to prove that if the root number of $E$ is $-1$ then all 
supersingular $j$-invariants of elliptic curves defined over $\mathbb{F}_{p}$ 
are zeros of the corresponding divisor polynomial. 

If the root number is $1$ we study the discrepancy between rank $0$ and higher 
rank elliptic curves, as in the latter case the amount of supersingular zeros 
in $\mathbb{F}_p$ seems to be larger.
In order to partially explain this phenomenon, we conjecture that when $E$ has 
positive rank the values of the coefficients of $v_E$ corresponding to 
supersingular elliptic curves defined over $\mathbb{F}_p$ are even. We prove 
this conjecture in the case when the discriminant of $E$ is positive, and 
obtain several other results that are of independent interest.

\end{abstract}

\section{Introduction}
Let $E$ be a rational elliptic curve of prime conductor $p$. Denote by $f_E(\tau)\in S_2(\Gamma_0(p))$ the newform associated to $E$ by the Shimura-Taniyama correspondence. 
Serre \cite[Theorem ~11]{Ser2} showed that there is an isomorphism between 
modular forms modulo $p$ of weight $p+1$ and level $1$ and modular forms modulo 
$p$ of weight $2$ and 
level $p$. More precisely he proved that $f_E(\tau) \equiv F_E(\tau) 
\pmod{p}$, where
\[
F_E(\tau)=\textrm{Trace}^{\Gamma_0(p)}_{\SL_2(\Z)}\left( f_E(\tau)\cdot( E_{p-1}(\tau)-p^{p-1}E_{p-1}(p\tau))\right)\in S_{p+1}(\SL_2(\Z)),
\]
and $E_{p-1}(\tau)$ is the normalized Eisenstein series of weight $p-1$.

Given $k \in \mathbb{Z}$ define

\[ \tilde{E}_{k}(\tau)= \left\{ \begin{array}{lcc}
             1 &   if  & k \equiv 0 \bmod{12} ,\\
             \\  E_{4}(\tau)^2 E_{6}(\tau) &   if  & k \equiv 2 \bmod{12}, \\
             \\  E_{4}(\tau) &   if  & k \equiv 4 \bmod{12}, \\
             \\  E_{6}(\tau) &   if  & k \equiv 6 \bmod{12}, \\
             \\  E_{4}(\tau)^2 &   if  & k \equiv 8 \bmod{12} ,\\
             \\   E_{4}(\tau)E_{6}(\tau)  &   if  & k \equiv 10 \bmod{12} ,
             \end{array}
   \right.  \]
where $E_{4}(\tau)$ and $E_{6}(\tau)$ are the classical Eisenstein series of weight $4$ and $6$ respectively.

Moreover, consider
\[ m(k)= \left\{ \begin{array}{lcc}
             \left \lfloor{\frac{k}{12}}\right \rfloor  &   if  &  k \not\equiv 
2 \bmod{12}, \\
             \\ \left \lfloor{\frac{k}{12}}\right \rfloor -1 &  if  &  k \equiv 
2 \bmod{12}.
             \end{array}
   \right.  \]

Given any $g \in M_{k}(SL_{2}(\Z))$ we obtain a rational function $\tilde{F}(g,x)$ which is characterized by the formula 

\[ \frac{g(\tau)}{\Delta(\tau)^{m(k)} \tilde{E}_{k}(\tau)}  = \tilde{F}(g,j(\tau))  ,\]

where $\Delta$ is the only weight $12$ and level $1$ cuspform and $j$ is the classical $j$-invariant. In order to define a polynomial and not just a rational function, define

\[ h_{k}(x)= \left\{ \begin{array}{lcc}
             1 &   if  & k \equiv 0 \bmod{12} ,\\
             \\  x^2 (x-1728) &   if  & k \equiv 2 \bmod{12}, \\
             \\  x &   if  & k \equiv 4 \bmod{12}, \\
             \\  x-1728 &   if  & k \equiv 6 \bmod{12}, \\
             \\  x^2 &   if  & k \equiv 8 \bmod{12} ,\\
             \\   x(x-1728)  &   if  & k \equiv 10 \bmod{12} .
             \end{array}
   \right.  \]

The \emph{divisor polynomial} is 

\[ F(g,x)= h_{k}(x)\tilde{F}(g,x) .\]

Ono \cite[p.~118]{Ono} made the observation that the zeros of $F(F_E,x) \bmod{p} 
\in \F[x]$ (in $\overline{\F}$) are often supersingular $j$-invariants (i.e. 
$j$-invariants of supersingular elliptic curves over $\overline{\F}$), and 
asked for an explanation for this. 

For example, if $E_{83}$ is the elliptic curve of conductor $83$ given by 
\[
E_{83}: \quad y^2+xy+y=x^3+x^2+x,
\]
then 
\begin{align*}
F_{E_{83}}(\tau) &\equiv  \Delta(\tau)E_4(\tau)^{18}+19 \Delta(\tau)^2 E_4(\tau)^{15}+21 \Delta(\tau)^3 E_4(\tau)^{12}\\
&+58 \Delta(\tau)^4 E_4(\tau)^9 + 21 \Delta(\tau)^5 E_4(\tau)^6+60\Delta(\tau)^6 E_4(\tau)^3 \pmod{83}.\\
\end{align*}
Since $j(\tau)=E_4(\tau)^3/\Delta(\tau)$, it follows that
$$F(F_{E_{83}},x)\equiv x(x+15)(x+16)(x+33)(x+55)(x+66) \pmod{83}.$$

In this case, the roots of $F(F_{E_{83}},x)$ in $\overline{\mathbb{F}}_{83}$ are precisely the supersingular $j$-invariants that lie in ${\mathbb{F}}_{83}$.

It is worth noting that the root number of $E_{83}$ is $-1$. The behavior of the roots of the divisor polynomial is explained by the following theorem. 

\begin{theorem}\label{thm:1}
Let $E/\Q$ be an elliptic curve of prime conductor $p$ with root number $-1$, 
and let $F(F_E,x)$ be the corresponding divisor polynomial. If $j\in \F$ is a 
supersingular $j$-invariant mod $p$, then $F(F_E,j)\equiv 0 \pmod{p}$.
\end{theorem}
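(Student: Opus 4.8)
The plan is to translate the conclusion into a vanishing statement for the quaternionic modular form $v_E$ on the set of supersingular points, and then to read that vanishing off from the hypothesis $w(E)=-1$ by a short symmetry argument. First I would reduce to $v_E$. By Serre's congruence $\overline{F_E}=\overline{f_E}$ as $q$-expansions, and in weight $p+1$ this reduction is the Hasse invariant $A=\overline{E_{p-1}}$ (whose zero divisor on $X(1)_{\overline{\F}}$ is exactly the supersingular locus, with simple zeros) multiplied by the weight-two mod-$p$ form with $q$-expansion $\overline{f_E}$; this is why the supersingular locus is where the behaviour of $F(F_E,x)\bmod p$ concentrates. A supersingular $j_0$ is a root of $F(F_E,x)\bmod p$ precisely when $\overline{F_E}$ vanishes at $j_0$, and by the bijection between supersingular zeros of the divisor polynomial and zeros of $v_E$ this is equivalent to the coefficient $v_E(j_0)$ being $\equiv 0\pmod p$. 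Thus it suffices to prove that $v_E$ vanishes modulo $p$ at every supersingular $j$-invariant lying in $\F$.

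Next I would characterize the $\F$-rational supersingular points intrinsically. Every supersingular $j$-invariant lies in $\mathbb{F}_{p^{2}}$, so $j_0\in\F$ if and only if $j_0^{\,p}=j_0$, i.e.\ $j_0$ is fixed by the Frobenius involution $j\mapsto j^{p}$. Under the dictionary between supersingular points and the Brandt module of the definite quaternion algebra ramified at $p$ and $\infty$, this Frobenius is the Atkin-Lehner involution $w_p$ at the ramified prime, sending a supersingular curve $E_0$ to $E_0^{(p)}$; concretely its fixed points are the curves with $a_p=0$, those having CM by an order in $\Q(\sqrt{-p})$.

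Finally I would compute the relevant eigenvalue. Since $v_E$ corresponds to $f_E$ under Jacquet-Langlands it is a $w_p$-eigenvector, and the crux is that its eigenvalue equals the root number: for a weight-two newform of level $p$ the Atkin-Lehner eigenvalue $\la_p$ of $f_E$ satisfies $w(E)=-\la_p$, while the geometric involution $w_p$ acts on the quaternionic side with the opposite sign $-\la_p=w(E)$. Hence if $w(E)=-1$ then $v_E\mid w_p=-v_E$. As $w_p$ fixes each $\F$-rational supersingular point $j_0$, the coefficient there obeys $v_E(j_0)=-v_E(j_0)$, so $2\,v_E(j_0)=0$ and therefore $v_E(j_0)\equiv 0\pmod p$ because $p$ is odd. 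Combined with the first step, this proves that every supersingular $j$-invariant in $\F$ is a root of $F(F_E,x)\bmod p$.

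The main obstacle is twofold. The first step presupposes the bijection between supersingular zeros of the divisor polynomial and zeros of $v_E$, which rests on the mod-$p$ geometry of $X_0(p)$ and an Eichler-Shimura/Emerton-type identification of the weight-two reduction of $f_E$ with a function on supersingular points; this is the technically substantial input. The more delicate point, however, is pinning down the sign in the last step, since the entire argument depends on the Frobenius $j\mapsto j^{p}$ acting on $v_E$ with eigenvalue exactly $w(E)$ rather than $-w(E)$ --- the opposite sign would instead force invariance and yield no vanishing. Once this sign is established (and it is consistent with the curve $E_{83}$ of root number $-1$, whose divisor polynomial vanishes at precisely the supersingular $j$-invariants in $\F$), the conclusion is immediate.
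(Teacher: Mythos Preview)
Your proposal is correct and follows essentially the same route as the paper: reduce to the vanishing of $v_E$ at $\F$-rational supersingular points via Theorem~\ref{thm:zeros}, then use that the Frobenius involution $e_i\mapsto e_{\bar i}$ (the paper's $t_p$) fixes those points while acting on $v_E$ with eigenvalue $-1$ when the root number is $-1$. The only difference is cosmetic: the paper notes that $v_E(e_i)=0$ outright in $\Z$ rather than merely modulo $p$, and it pins down the sign directly from the identity $T_p+W_p=0$ on $M_2$, which resolves precisely the ``delicate point'' you flag.
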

 
If the root number of $E$ is $1$, the supersingular zeros of divisor polynomials are harder to understand.
Denote by $s_p$ the number of isomorphism classes of supersingular elliptic curves defined over $\F$. Eichler proved that
\[
s_p=
\begin{cases}
\frac{1}{2}h(-p) &\textrm{if } p \equiv 1 \pmod{4},\\
2h(-p) &\textrm{if }p \equiv 3 \pmod{8},\\
h(-p) &\textrm{if }p \equiv 7 \pmod{8},
\end{cases}
\]
where $h(-p)$ is the class number of the imaginary quadratic field $\Q(\sqrt{-p})$. See \cite{Gross} for an excellent exposition of Eichler's work.

Denote by $N_p(E)$ the number of $\F$-supersingular zeros of the divisor polynomial $F(F_E,x)$, i.e. $$N_p=\#\{j:j \in \F, F(F_E,j) \equiv 0 \bmod{p}  \textrm{ and $j$ is supersingular $j$-invariant} \}.$$

Figure \ref{fig:1} shows the graph of the function $\frac{N_p(E)}{s_p}$ where $E$ ranges over all elliptic curves of root number $1$ and conductor $p$ where $p<10000$. The elliptic curves of rank zero ($158$ of them) are colored in blue, while the elliptic curves of rank two ($59$ of them) are colored in red.
\begin{figure}[htp]\label{fig:1}
\centering
\includegraphics[scale = 0.33]{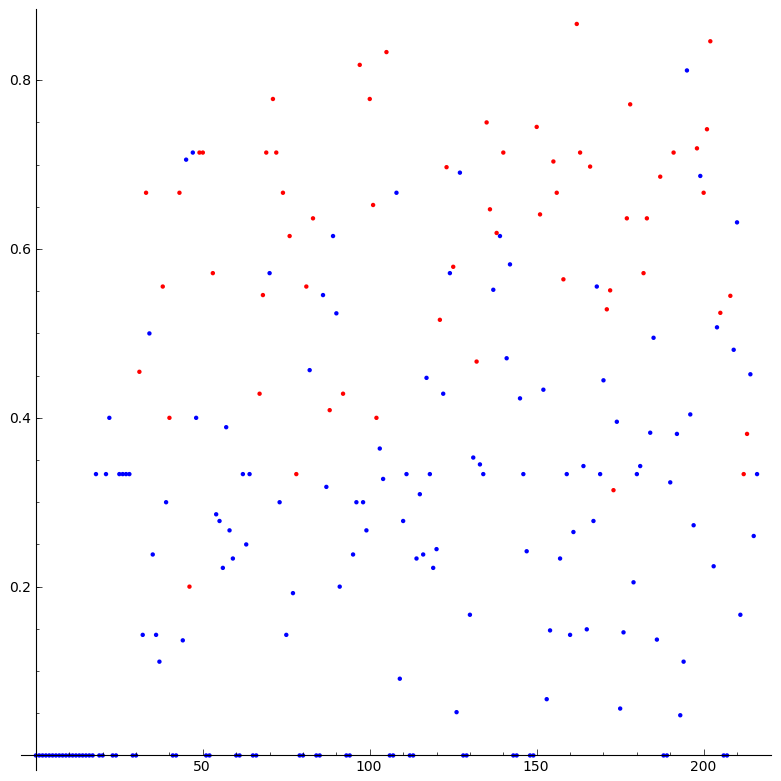}
\caption{$\frac{N_p}{s_p}$ for $p<10000$.}
\end{figure}

It would be interesting to understand this data. In particular, 
\begin{questions}\
\begin{enumerate}
	\item Why are there "so many" $\F$-supersingular zeros?
	\item How can we explain the difference between rank $0$ and rank $2$ curves?
	\item What about the outlying rank $0$ curves (e.g. of conductor $p=4283$ and $p=5303$) with the "large" number of zeros?
\end{enumerate}
\end{questions}

\begin{remark}
It seems that there is no obvious connection between the number of $\mathbb{F}_{p^2}$-supersingular zeros of the divisor polynomial $F(F_E,x)$ and the rank of elliptic curve $E$.    
\end{remark}

The key idea to study these questions is to show (following \cite{Serre2}) how to associate to $F_E$ a modular form $v_E$ on the quaternion algebra $B$ over $\Q$ ramified at $p$ and $\infty$. Such modular form is a function on the (finite) set of isomorphism classes of supersingular elliptic curves over $\overline{\F}$.  In order to explain this precisely we combine the expositions from \cite{Emerton} and \cite{Gross}.

Let $X_0(p)$ be the curve over $\mathrm{Spec}\, \Z$ that is a coarse moduli space for the $\Gamma_0(p)$-moduli problem. The geometric fiber of $X_0(p)$ in characteristic $p$ is the union of two rational curves meeting at $n=g+1$ ordinary double points: $e_1$, $e_2$, \ldots, $e_{n}$ ($g$ is the arithmetic genus of the fibers of $X_0(p)$.) They are in bijective correspondence with the isomorphism classes of supersingular elliptic curves $E_i/\overline{\F}$. Denote by $\mathcal{X}$ the free $\Z$-module of divisors supported on the $e_i$. The action of Hecke correspondences on the set of $e_i$ induces an action on $\X$. Explicitly, the action of the correspondence $t_m$ ($m\ge 1$) is given by the transpose  of the Brandt matrix $B(m)$
\[
t_m e_i = \sum_{j=1}^n B_{ij}(m)e_j.
\]
There is a correspondence between newforms of level $p$ and weight $2$ and modular forms for the quaternion algebra $B$ that preserves the action
of the Hecke operators.
Let $v_E=\sum v_E(e_i)e_i\in \X$  be an eigenvector for all $t_m$ corresponding to $f_E$, i.e. $t_m v_E= a(m) v_E$, where $f_E(\tau)=\sum_{m=1}^\infty a(m)q^m$. We normalize $v_E$ (up to the sign) such that the greatest common divisor of all its entries is $1$. We are now able to state the following
crucial theorem.
\begin{theorem} \label{thm:zeros}
Let $j=j(E_i)$ be the $j$-invariant of the supersingular elliptic curve $E_i$. Then 
$$F(F_E,j) \equiv 0 \bmod{p} \iff v_E(e_i) \equiv 0 \bmod{p}.$$ 
\end{theorem}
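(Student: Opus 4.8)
The plan is to package both sides of the equivalence as coordinates of a single vector in $\X\otimes\overline{\F}$ built from $F_E$, and to identify that vector with the reduction of $v_E$ up to a unit. Write $j_i=j(E_i)$, and for a level-one form $g\in M_{p+1}(\SL_2(\Z))$ define a function on the supersingular points by
\[
\Psi(g)=\sum_{i=1}^{n}\overline{F(g,j_i)}\,e_i\in\X\otimes\overline{\F},
\]
where $\overline{F(g,j_i)}$ means the reduction modulo $p$ of the divisor polynomial $F(g,x)$ evaluated at $j_i$. By construction the $e_i$-coordinate of $\Psi(F_E)$ vanishes if and only if $F(F_E,j_i)\equiv0\pmod p$, so the theorem reduces to showing that $\Psi(F_E)$ is a nonzero multiple of $\bar v_E$ (whose coordinates lie in $\F\subseteq\overline{\F}$). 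I will obtain this from two facts: that $\Psi$ intertwines the weight-$(p+1)$ Hecke operators $T_m$ with the operators $t_m$ on $\X$, and that $\Psi(F_E)\neq0$.

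First I would check that $F_E\bmod p$ is itself a Hecke eigenform with eigenvalues $a(m)$. Since $k=p+1\equiv2\pmod{p-1}$, Fermat's congruence $d^{\,p}\equiv d\pmod p$ forces the action of $T_m$ on $q$-expansions in weight $p+1$ to agree modulo $p$ with the action in weight $2$ whenever $(m,p)=1$; together with $F_E\equiv f_E\pmod p$ and $T_mf_E=a(m)f_E$ this gives $T_mF_E\equiv a(m)F_E\pmod p$. The substantive point, and the step I expect to be the main obstacle, is the Hecke-equivariance of $\Psi$ itself, namely that evaluation at supersingular points intertwines $T_m$ with $t_m$. Here I would use the Deligne--Rapoport model of $X_0(p)$ in characteristic $p$, whose two rational components meet exactly along the supersingular points, together with Serre's weight filtration relating weight-two forms of level $p$ to weight-$(p+1)$ forms of level one (following \cite{Serre2,Gross,Emerton}): under these identifications the weight-$(p+1)$ correspondence $T_m$ becomes the correspondence on supersingular points computing the Brandt matrix $B(m)$. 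Granting this, $\Psi(F_E)$ is a $t_m$-eigenvector with eigenvalues $a(m)$, hence lies on the eigenline through $\bar v_E$ determined by the Hecke-equivariant correspondence between newforms and quaternionic forms recalled above, so $\Psi(F_E)=\lambda\,\bar v_E$ for some $\lambda\in\overline{\F}$.

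For the nonvanishing I invoke Deligne's theorem that $E_{p-1}\bmod p$ is the Hasse invariant, a weight-$(p-1)$ form whose zeros are simple and are precisely the supersingular points. If $\Psi(F_E)=0$, then $F_E\bmod p$ vanishes at every supersingular point, so it is divisible by the Hasse invariant in the ring $\F[E_4,E_6]$ of level-one mod-$p$ forms; the quotient is a holomorphic weight-two form of level one, which is zero since there are no such forms. But $F_E\equiv f_E$ has $a(1)=1$, a contradiction. Hence $\lambda\neq0$, and comparing $e_i$-coordinates yields $F(F_E,j_i)\equiv\lambda\,v_E(e_i)\pmod p$, up to the automorphism factors $w_i=\tfrac12\#\mathrm{Aut}(E_i)\in\{1,2,3\}$ carried by the normalization of $\X$. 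Since the conductor satisfies $p\ge11$, these $w_i$ are units modulo $p$ and do not affect whether a coordinate vanishes, so $F(F_E,j_i)\equiv0\iff v_E(e_i)\equiv0$.

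The only remaining care concerns supersingular $j$-invariants equal to $0$ or $1728$, where $\tilde E_k$ vanishes and the factor $h_k(x)$ in the divisor polynomial intervenes. There $h_k$ is designed exactly to cancel the vanishing of $\Delta^{m(k)}\tilde E_k$, and the same automorphism units $w_i$ record the extra automorphisms at these points; a short case analysis over the residue of $k=p+1$ modulo $12$ confirms that $\overline{F(F_E,j_i)}$ and $v_E(e_i)$ still vanish simultaneously, which completes the argument.
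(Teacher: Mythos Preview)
Your strategy is the paper's strategy: realize $F_E\bmod p$ as a Hecke eigenvector in $\X\otimes\overline{\F}$ via evaluation at supersingular points, and then match it with $\bar v_E$. The nonvanishing argument you give (divide by the Hasse invariant, land in weight two level one, contradiction) is correct and in fact makes explicit a point the paper leaves implicit. Two steps, however, are not handled with enough care.

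First, and most importantly, the sentence ``hence lies on the eigenline through $\bar v_E$ determined by the Hecke-equivariant correspondence between newforms and quaternionic forms recalled above'' is where the real content is, and you are skipping it. The Jacquet--Langlands-type correspondence you invoke is a characteristic-zero statement; it does not by itself tell you that the $t_m$-eigenspace in $\X\otimes\overline{\F}$ with system of eigenvalues $\{a(m)\bmod p\}$ is one-dimensional. That is a genuine multiplicity-one-mod-$p$ theorem, and the paper invokes it explicitly (Emerton, Theorems~0.5 and~1.14). Without this input there is no reason two mod-$p$ eigenvectors with the same Hecke eigenvalues must be proportional, and your conclusion $\Psi(F_E)=\lambda\,\bar v_E$ does not follow. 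You do cite \cite{Emerton}, but for the wrong thing (the Hecke compatibility rather than multiplicity one).

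Second, the Hecke equivariance of your map $\Psi$ is more delicate than you indicate. The values $F(g,j_i)$ differ from the Katz-theoretic values $g(E_i,\omega_i)$ by $i$-dependent nonzero factors coming from $\Delta^{m(k)}\tilde E_k$, so the map ``evaluate the divisor polynomial'' is not literally $T_m\mapsto t_m$ equivariant. The paper deals with this by first passing to Katz values (Proposition~\ref{prop:zerosmodp}), then dividing by $E_{p+1}$, which is nonvanishing on the supersingular locus, to land in weight~$0$; Robert's lemma (Proposition~\ref{prop:Robert}) is exactly what makes that division Hecke-compatible (with a Tate twist absorbed by the Brandt-matrix normalization). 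Your remark ``up to the automorphism factors $w_i$'' gestures at this but does not capture it: the discrepancy is not only $w_i$. None of these factors affect the \emph{vanishing} equivalence, which is why your conclusion survives, but you should separate the two steps as the paper does: first identify the Katz-value vector with $\bar v_E$ up to a scalar (this is where Hecke equivariance and Emerton's multiplicity one are used), and then observe that Katz values and divisor-polynomial values vanish simultaneously.
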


This theorem allows us to give a more explicit description of the supersingular zeros of the divisor polynomial. Furthermore it enables us to obtain computational data in a much more efficient manner.
The proof of Theorems \ref{thm:1} and \ref{thm:zeros} will be the main goal of Section \ref{section:proofs}. In order to prove them we will use
 both Serre's and Katz's theory of modular forms modulo $p$ and the modular forms introduced in \cite{Serre2}.

Now, let $D_E$ be the congruence number of $f_E$, i.e. the largest integer such 
that there exists a weight two cusp for on $\Gamma_0(p)$, with integral 
coefficients, which is orthogonal to $f_E$ with respect to the Petersson inner 
product and congruent to $f_E$ modulo $D_E$.
The congruence number is closely related to $\deg \phi_{f_E}$, the modular 
degree of $f_E$, which is the degree of the minimal parametrization 
$\phi_{f_E}:X_0(p)\rightarrow E'$ of the strong Weil elliptic curve $E'/\Q$ 
associated to $f_E$ ($E'$ is isogenous to $E$ but they may not be equal). In 
general, $\deg \phi_{f_E}|D_E$, and if the 
conductor of $E$ is prime, we have that $\deg \phi_{f_E}=D_E$ (see \cite{ARS}). 

The idea is to relate these concepts to the aforementioned quaternion modular 
form $v_E$.
Denote by $w_i=\frac{1}{2}\#\textrm{Aut}(E_i)$. It is known that $w=\prod_i w_i$ 
is equal to the denominator of $\frac{p-1}{12}$ and $\sum_{i=1}^n 
\frac{1}{w_i}=\frac{p-1}{12}$ (Eichler's mass formula). 
 We define a $\Z$-bilinear pairing 
\[
\langle -,- \rangle:\X\times\X \rightarrow \Z,
\]
by requiring $\langle e_i,e_j \rangle=w_i \delta_{i,j}$ for all $i,j \in \{1,\ldots,n \}$.

We have the following theorem due to Mestre  \cite[Theorem ~3]{Mestre}
\begin{theorem} \label{prop:modular} Using the notation above, we have
$$\langle v_E,v_E \rangle=t D_E,$$
where $t$ is the size of $E(\Q)_{tors}$.
\end{theorem}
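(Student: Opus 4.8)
The plan is to route everything through the Eichler--Brandt theta correspondence, which converts the purely quaternionic quantity $\langle v_E,v_E\rangle$ into a statement about the integral structure on $S_2(\Gamma_0(p))$, where the congruence number $D_E$ lives by definition. First I would record two formal inputs. The pairing is Hecke-equivariant: since $t_m$ acts on $\X$ by the transposed Brandt matrix and the Brandt matrices satisfy the symmetry $w_i B_{ji}(m)=w_j B_{ij}(m)$, each $t_m$ is self-adjoint for $\langle-,-\rangle$. Consequently distinct Hecke eigenspaces of $\X\otimes\Q$ are orthogonal, and because $f_E$ has rational integer eigenvalues occurring with multiplicity one, the $v_E$-eigenline is rational and $\langle v_E,v_E\rangle\in\Z$ is intrinsic. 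I would also note that $v_E$ lies in the degree-zero submodule $\X^0=\{\sum a_ie_i:\sum a_i=0\}$: the Eisenstein vector $\sum_i w_i^{-1}e_i$ is itself a $t_m$-eigenvector, so orthogonality forces $\langle v_E,\sum_i w_i^{-1}e_i\rangle=\deg(v_E)=0$.

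The heart of the argument is the theta map. To the eigenvector $v_E$ I attach the weight-two level-$p$ form
\[
\theta_{v_E}(\tau)=\sum_{m\ge 1}\langle t_m v_E,\,v_E\rangle\,q^m ,
\]
whose Fourier coefficients are integers because they are values of $\langle-,-\rangle$ on the integral lattice $\X$. Using $t_m v_E=a(m)v_E$ this collapses to $\theta_{v_E}=\langle v_E,v_E\rangle\,f_E$ (the constant term vanishes as $v_E\in\X^0$, and $a(1)=1$). Thus $\langle v_E,v_E\rangle$ is exactly the scalar by which the integral theta series attached to the primitive vector $v_E$ is a multiple of the normalized newform $f_E$.

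It remains to identify this scalar with $D_E$ up to the torsion factor. Here I would use the Hecke algebra $\mathbb{T}\subset\mathrm{End}(S_2(\Gamma_0(p)))$ and the homomorphism $\lambda_E:\mathbb{T}\to\Z$ attached to $f_E$; with $I=\ker\lambda_E$, the congruence number is the order of the congruence module $\mathbb{T}/(I+\mathrm{Ann}_{\mathbb{T}}(I))$, equivalently the largest integer $D_E$ admitting an integral cusp form orthogonal to $f_E$ and congruent to it modulo $D_E$. Because $\X^0$ is, up to sign, the character group of the toric part of $J_0(p)$ in characteristic $p$ with $\langle-,-\rangle$ the monodromy pairing, it is a faithful $\mathbb{T}$-module on which the pairing is non-degenerate. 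Transporting $f_E$ to $v_E$, the theta identity above shows that pairing against $v_E$ realizes the projection onto the $f_E$-isotypic part; the index by which this projection fails to be integral on the integral lattice is precisely the congruence module, which yields $\langle v_E,v_E\rangle=t\,D_E$ once the normalizations are matched.

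The main obstacle is the torsion factor $t=\#E(\Q)_{tors}$. It is invisible in the formal theta computation and enters only when one compares the strong Weil curve $E'$ (for which $\deg\phi_{f_E}=D_E$ by \cite{ARS}) with $E$ itself and matches the gcd-$1$ normalization of $v_E$ against the integral structure on $S_2(\Gamma_0(p))$. Concretely I would track $t$ through the exact sequence relating the character group $\X^0$ of $J_0(p)$ at $p$ to the rank-one character group of $E'$ under the modular quotient: the monodromy pairing on the latter computes $\mathrm{ord}_p(q_{E'})$, while the discrepancy between the induced and the intrinsic lattices is governed by the component group at $p$ and the rational torsion. Pinning down this factor exactly, rather than merely up to a unit, is the delicate point, and is where Ribet's analysis of $J_0(p)[I]$ and the prime-conductor hypothesis become essential.
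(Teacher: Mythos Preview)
The paper does not actually prove this theorem: it is stated as a result of Mestre and cited without argument. So there is no ``paper's own proof'' to compare against in the usual sense; the authors simply quote \cite[Theorem~3]{Mestre}. (They even drafted an alternative argument via Gross's pairing $\X\otimes_{\mathbb{T}}\X\to M_2$, but it only produced a divisibility $\langle v_E,v_E\rangle \mid 2D_E$ and was ultimately removed.)

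Your framework is the right one, and the first two-thirds of your outline are solid: the Brandt--Eichler theta identity $\theta_{v_E}=\langle v_E,v_E\rangle\, f_E$ is exactly how one passes from the quaternionic pairing to the integral structure on $S_2(\Gamma_0(p))$, and it immediately shows that $\langle v_E,v_E\rangle$ divides the index of $\Z f_E$ in the projection of the integral theta lattice onto the $f_E$-line. Combined with the self-adjointness of $t_m$ and multiplicity one, this gives a relation between $\langle v_E,v_E\rangle$ and the congruence module.

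The genuine gap --- which you yourself flag --- is the exact identification of the proportionality constant as $t=\#E(\Q)_{\mathrm{tors}}$. Your theta argument, as written, only gives a divisibility and leaves an undetermined factor; the sentence ``which yields $\langle v_E,v_E\rangle=t\,D_E$ once the normalizations are matched'' is exactly the step that is not carried out. The sketch in your last paragraph (comparing monodromy pairings on character groups, component groups, and rational torsion along $J_0(p)\to E'$) points in the right direction, but it is not a proof: one needs Mestre's explicit computation with the graph of supersingular isogenies, or equivalently a careful analysis of the component group of the N\'eron model at $p$ together with Mazur's determination of the torsion of $J_0(p)$, to see that the discrepancy is \emph{precisely} $\#E(\Q)_{\mathrm{tors}}$ and not merely a divisor or multiple of it. Without that, your argument establishes the correct shape of the formula but not the statement itself.
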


We observe that the modular degree of the elliptic curves under consideration 
(of rank $0$ or $2$, conductor $p$, where $p<10000$) is ``small'', which 
suggests that the integral vector $v_E$ will have many zero entries. This gives 
a partial answer to Question 1. Zagier \cite[Theorem ~5]{Zagier} proved that if we consider
elliptic curves with bounded $j$-invariants we have 
\[\deg \phi_{f_E}<<p^{7/6}\log(p)^3. \]
 On the other 
hand, Watkins \cite[Theorem ~5.1]{Wat} showed that 
\[\deg \phi_{f_E} >> p^{7/6}/\log(p). \]

To address Questions $2$ and $3$ we focus on the mod $2$ behavior of $v_E$. Based on the numerical evidence we pose the following conjecture.

\begin{conjecture}\label{conj:1} If $E$ is an elliptic curve of prime conductor 
$p$, root number $1$, and $rank(E)>0$, then $v_E(e_i)$ is an even number for 
all $e_i$  with $j(E_i)\in \F$ 
\end{conjecture}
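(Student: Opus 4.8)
The plan is to study $v_E$ modulo $2$ through the involution of $\X$ induced by Frobenius on supersingular $j$-invariants, combined with Mestre's pairing. Write $W$ for the permutation of the basis $\{e_i\}$ sending $e_i$ to the class of the Frobenius conjugate $E_i^{(p)}$; its fixed points are exactly the $e_i$ with $j(E_i)\in\F$. Since $W$ commutes with every $t_m$ and $v_E$ is a rational eigenvector, $W v_E=\pm v_E$, and identifying $W$ with the Atkin--Lehner involution $w_p$ (equivalently, matching with Theorem \ref{thm:1}) pins the sign to the root number: $W v_E=\ve(E)\,v_E$. For root number $-1$ this already gives $v_E(e_i)=-v_E(e_i)$, hence $v_E(e_i)=0$, on the fixed coordinates, reproving Theorem \ref{thm:1}. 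Thus for root number $+1$ the vector $v_E$ is genuinely $W$-invariant, and the conjecture reduces to showing that $\bar v_E:=v_E\bmod 2$ vanishes on the $W$-fixed coordinates.

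As a first, unconditional step I would extract a global parity relation from Mestre's pairing. The cuspidal eigenvector $v_E$ is orthogonal, under $\langle-,-\rangle$, to the Eisenstein eigenvector $\mathbf{w}=\sum_i w_i e_i$ (the unique vector with $t_m\mathbf{w}=\sigma(m)\mathbf{w}$, which exists because the row sums of the Brandt matrices equal $\sigma(m)=\sum_{d\mid m}d$; here $\langle-,-\rangle$ is Hecke self-adjoint precisely by the Brandt symmetry $w_iB_{ij}(m)=w_jB_{ji}(m)$). Hence $\sum_i w_i^2\,v_E(e_i)=0$. Reducing modulo $2$ and using $w_i^2\equiv w_i$, only the odd-$w_i$ terms survive; pairing the non-$\F$-rational indices through $W$ (which preserves $w_i$ and, by $W$-invariance, the value $v_E(e_i)$) makes their contribution a multiple of $2$. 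This yields $\sum_{j(E_i)\in\F,\ w_i\ \mathrm{odd}} v_E(e_i)\equiv 0\pmod 2$, i.e. the conjecture ``on average'' over the $\F$-rational points.

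To upgrade this single congruence to the vanishing of each individual coordinate I would pass to the Eichler/Shimura theta lift: the $\F$-rational supersingular points are exactly those with CM by orders in $\Q(\sqrt{-p})$, and Gross's formula expresses the corresponding weighted combinations of the $v_E(e_i)$ in terms of central values $L(E/\Q(\sqrt{-p}),1)=L(E,1)\,L(E^{(-p)},1)$ and periods, where $E^{(-p)}$ is the twist by $\Q(\sqrt{-p})$. The hypotheses should enter in two ways: positive rank together with root number $+1$ forces $L(E,1)=0$, while the positive-discriminant hypothesis is exactly the condition that complex conjugation acts trivially on $E[2]$ (equivalently $E(\R)$ has two components, $c_\infty=2$), which I would use to control the $2$-adic valuation of the relevant periods and the shape of $\bar\rho_{E,2}$ on the Frobenius classes of $\Q(\sqrt{-p})$. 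Matching these inputs against the theta coefficients should force each $v_E(e_i)$ with $j(E_i)\in\F$ to be even.

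The main obstacle is precisely this last upgrade. The involution $W$ and Mestre's pairing only see symmetric, aggregate information, so the genuinely new content --- separating individual CM coordinates and producing the extra factor of $2$ --- must come from the arithmetic of the mod $2$ representation and the period comparison, and it is here that positive discriminant is indispensable (and presumably why the general conjecture, where $c$ may act nontrivially on $E[2]$, remains open). Some care will also be needed at the two exceptional $\F$-rational points $j=0,1728$, where $w_i>1$ and the extra automorphisms perturb both the pairing and the theta lift.
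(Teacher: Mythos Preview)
The statement you were asked to prove is a \emph{conjecture}: the paper does not claim a proof of it in general, and neither does your proposal supply one. What the paper actually proves is Theorem~\ref{thm:even}, the special case where $E$ has positive discriminant and no rational $2$-torsion (and, notably, \emph{without} assuming positive rank). Your write-up silently imports the positive-discriminant hypothesis in the ``upgrade'' step, so at best you are aiming at Theorem~\ref{thm:even}, not at Conjecture~\ref{conj:1}.

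Even for that special case, your route is different from the paper's and has a genuine gap. The paper never invokes $L(E,1)=0$, periods, or the Gross--Waldspurger formula for the proof; instead it argues purely with Brandt matrices and the mod~$2$ Galois representation. The two ingredients are: (i) for any odd prime $\ell\ne p$ with $\left(\frac{-p}{\ell}\right)=-1$, every entry $B_{ij}(\ell)$ with $i,j\in S_p$ is even (Proposition~\ref{prop:even}, proved via an explicit involution $\Theta(f)=-\tfrac{1}{p}\phi_j f\phi_i$ on $\mathrm{Hom}(E_i,E_j)$ built from the Frobenius endomorphisms); and (ii) under the hypotheses of Theorem~\ref{thm:even}, Chebotarev in $\Q(i,E[2])$ produces infinitely many such $\ell$ with $a(\ell)$ odd (Proposition~\ref{prop:ell}). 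Feeding one such $\ell$ into the eigenvector equation $t_\ell v_E=a(\ell)v_E$, using $B_{ij}(\ell)\equiv0\pmod 2$ on $S_p\times S_p$ and pairing the remaining indices by Frobenius (Propositions~\ref{prop:Bij} and \ref{prop:sameparity}), gives $v_E(e_j)\equiv 0\pmod 2$ for every $j\in S_p$ in one stroke. No rank hypothesis is used.

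Your Gross--Waldspurger idea is exactly what the paper discusses in its final section as a \emph{heuristic} for why higher rank should produce more $\F$-rational supersingular zeros: from $L(E,1)=0$ one gets $\langle v_E,b_D\rangle=0$ for every admissible $D$, and when $h(-D)=1$ this kills a single coordinate outright. But this yields exact vanishing of finitely many specific coordinates, not parity of all of them, and there is no mechanism in your sketch that converts ``$L(E,1)=0$ plus $c_\infty=2$'' into ``every $\F$-rational coordinate is even''. The step you flag as the main obstacle is indeed the whole difficulty, and the paper does not overcome it along those lines; it sidesteps it with the Brandt-matrix parity argument above. Your preliminary involution set-up and the aggregate congruence $\sum_{j(E_i)\in\F,\ w_i\ \text{odd}} v_E(e_i)\equiv 0\pmod 2$ are correct but are not used in the paper's proof.
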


While this is true for all $59$ rank $2$ curves we observed, it holds for $35$
out of $158$ rank $0$ curves. This explains in a way a difference in the number 
of $\F$-supersingular zeros between rank $0$ and rank $2$ curves (Question $2$), 
since, heuristically, it seems more likely for a number to be zero if we know 
it is even (especially in light of Theorem \ref{prop:modular} which suggests 
that the numbers $v_E(e_i)$ are small.)

The thirty two out of thirty five elliptic curves of rank $0$ for which the 
conclusion of Conjecture \ref{conj:1} holds (the remaining three curves have 
conductors $p=571, 6451$ and  $8747$) are distinguished from the other rank $0$ 
curves by the fact that their set of real points $E(\mathbb{R})$ is not 
connected (i.e. $E$ has positive discriminant). In general, we have the 
following theorem, which will be the subject of Section \ref{section:mod2}.

\begin{theorem} \label{thm:even}Let $E/\Q$ be an elliptic curve of prime conductor $p$ such that 
\begin{enumerate}
	\item $E$ has positive discriminant
	\item $E$ has no rational point of order $2$,
\end{enumerate}
then $v_E(e_i)$ is an even number for all $e_i$  with $j(E_i)\in \F$.
\end{theorem}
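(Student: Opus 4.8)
The plan is to combine the Frobenius involution on $\X$ with Mestre's formula (Theorem \ref{prop:modular}) to reduce the claim to a parity statement, and then to feed the two hypotheses through the mod $2$ Galois representation and the sign of the discriminant. First I would set up the involution at the ramified prime. Write $w$ for the involution of $\X$ induced by the $p$-power Frobenius $E_i \mapsto E_i^{(p)}$ (equivalently the Atkin--Lehner correspondence at $p$). Its fixed points are exactly the $e_i$ with $j(E_i)\in\F$: indeed $E_i^{(p)}\cong E_i$ iff $j(E_i)^p=j(E_i)$, and for such $E_i$ the $\F$-Frobenius $\pi$ satisfies $\pi^2=-p$, so $\mathrm{End}(E_i)$ contains $\sqrt{-p}$ and $E_i$ has CM by an order in $\Q(\sqrt{-p})$. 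Since $v_E$ is a $t_m$-eigenform and $t_p$ is, up to sign, the permutation $w$, the form $v_E$ is a $w$-eigenvector, $w\,v_E=\eta\,v_E$ with $\eta=\pm1$ the Atkin--Lehner eigenvalue. Hence the non-fixed points occur in conjugate pairs $\{e_i,we_i\}$ (all with $w_i=1$, as $j=0$ and $j=1728$ lie in $\F$ and so cannot occur among them), and $v_E(we_i)=\eta\,v_E(e_i)$.

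Next I would exploit hypothesis (2): no rational point of order $2$ makes $t=\#E(\Q)_{tors}$ odd, so by Theorem \ref{prop:modular} the $2$-adic valuations satisfy $v_2(\langle v_E,v_E\rangle)=v_2(D_E)$. Expanding $\langle v_E,v_E\rangle=\sum_i w_i v_E(e_i)^2$ and grouping the conjugate pairs, each pair contributes $v_E(e_i)^2+v_E(we_i)^2=(1+\eta^2)v_E(e_i)^2=2v_E(e_i)^2$, an even amount; therefore
\[
\sum_{j(E_i)\in\F} w_i\,v_E(e_i)^2 \equiv t\,D_E \pmod 2 .
\]
As $w_i=1$ except at $j=0$ (where $w_i=3$, still odd) and $j=1728$ (where $w_i=2$), this already controls the parity of $\sum_{j(E_i)\in\F,\ j\neq1728} v_E(e_i)$. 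This is the backbone of the argument, but it only yields evenness of a weighted sum, whereas the theorem asserts that \emph{every} individual fixed-point coefficient is even.

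To upgrade to individual coefficients I would bring in hypothesis (1). Since the conductor is prime, the minimal discriminant of $E$ equals $\pm p^m$ for some $m\geq1$, and positive discriminant forces the $+$ sign; hence the quadratic resolvent $\Q(\sqrt{\Delta_E})$ of the $2$-division field $\Q(E[2])$ is contained in $\R$ — it is $\Q$ or the real field $\pp$ — and in particular is \emph{not} the imaginary field $\Q(\sqrt{-p})$ governing the fixed points. I would then reduce $v_E$ modulo $2$ and regard $\overline v=v_E\bmod 2\in\X\otimes\F_2$ as a Hecke eigenvector; hypothesis (2) makes $\overline{\rho}_{E,2}$ irreducible, yielding a mod $2$ multiplicity one statement that pins down $\overline v$. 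The fixed-point coordinates (CM by $\Q(\sqrt{-p})$) carry, through Deuring's lifting and the reduction of CM points, a Hecke-equivariant structure tied to the imaginary field $\Q(\sqrt{-p})$ and its genus theory; the plan is to show that the eigensystem of $\overline{\rho}_{E,2}$, whose resolvent is real, cannot be supported there, so the projection of $\overline v$ onto the fixed coordinates vanishes, i.e. $v_E(e_i)$ is even for every $\F$-rational $e_i$.

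The main obstacle is precisely this last step: making rigorous how the sign of $\Delta_E$ — an Archimedean datum, natural because $B$ is ramified at $\infty$ as well as at $p$ — controls the individual residues $v_E(e_i)\bmod 2$ at the fixed points, and describing the Hecke eigensystems supported on the CM-by-$\Q(\sqrt{-p})$ part precisely enough to exclude $\overline{\rho}_{E,2}$. I expect this to require an explicit analysis of the ternary theta series attached to the maximal orders containing $\sqrt{-p}$, or a direct comparison of the Frobenius-at-$p$ and complex-conjugation-at-$\infty$ involutions; the parity relation derived above then serves as a global consistency check.
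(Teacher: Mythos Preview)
There is a genuine gap. Your Mestre-formula step yields only $\sum_{j(E_i)\in\F} w_i\, v_E(e_i)^2 \equiv tD_E \pmod 2$, and you never determine $D_E \bmod 2$; even granting evenness of the right-hand side you would learn at best that an even number of the fixed-point coordinates are odd, not that each one is even. The plan in your last paragraph --- ruling out support on the $\F$-rational part by contrasting the real resolvent of $\overline\rho_{E,2}$ with the imaginary CM field $\Q(\sqrt{-p})$ --- is a heuristic, not an argument; indeed the mod-$2$ multiplicity-one mechanism you gesture at does appear in the paper, but only in an \emph{alternative} proof that requires the extra hypothesis that $E$ is supersingular at $2$ (via level-raising mod $2$ to a congruent newform with the opposite Atkin--Lehner sign).

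The paper's main proof does not touch Mestre's formula. The two hypotheses are used to produce, via Chebotarev, an odd prime $\ell\ne p$ with $\left(\frac{-p}{\ell}\right)=-1$ and $a(\ell)$ odd: positive discriminant forces the quadratic subfield of $\Q(E[2])$ to be the real field $\Q(\sqrt p)$ rather than $\Q(\sqrt{-p})$, so one can arrange $\ell$ inert in $\Q(\sqrt{-p})$ while $Frob_\ell$ has order $3$ in $\Gal(\Q(E[2])/\Q)$. The second ingredient is a direct parity result for Brandt matrices: whenever $\left(\frac{-p}{\ell}\right)=-1$ one has $B_{ij}(\ell)\equiv 0\pmod 2$ for all $i,j\in S_p$, proved via the involution $\Theta(f)=-\tfrac{1}{p}\phi_j f\phi_i$ on $M_{ij}$ (built from the Frobenius endomorphisms $\phi_i,\phi_j$) together with a discriminant computation showing that no element of reduced norm $\ell$ can lie in an eigenspace of $\Theta$. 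Now read the $e_j$-coordinate of $t_\ell v_E=a(\ell)v_E$ for a fixed $j\in S_p$ and reduce mod $2$: since $a(\ell)$ is odd, the left side is $v_E(e_j)$; on the right, the sum over $i\in S_p$ vanishes by the Brandt parity, and the sum over $i\notin S_p$ vanishes by exactly the pairing you already noted ($B_{ij}(\ell)=B_{\bar i j}(\ell)$ and $v_E(e_i)\equiv v_E(e_{\bar i})$). This gives $v_E(e_j)\equiv 0\pmod 2$ for each $j\in S_p$ individually, which is what your approach could not reach.
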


Note that this gives a partial answer to Question 3 since, for example, all outlying elliptic curves of rank $0$ for which $\frac{N_p}{s_p}>0.5$ have positive discriminant and no rational point of order $2$.

Note that among 59 rank 2 curves, for 25 of them $E(\mathbb{R})$ is not 
connected (and have no rational point of order $2$). For the rest of the rank 2 
elliptic curves, we don't have an explanation of why they satisfy the 
conjecture.

Lastly, in the final section we will show how the Gross-Waldspurger formula 
might answer question $2$. More precisely, we will show that the quaternion 
modular form $v_E$ associated to an elliptic curve $E$ of rank $2$ must be 
orthogonal to divisors arising from optimal embeddings of certain imaginary 
quadratic fields into maximal orders of the quaternion algebra $B$, leading to a 
larger amount of supersingular zeros.

\bigskip

\noindent {\bf Acknowledgments:} We would like to thank the ICTP and the ICERM 
for 
provding the oportunity of working on this project. We would like to thank A. 
Pacetti for his comments on an early version of this draft.

\section{Proof of the main theorems} \label{section:proofs}

\subsection{ Katz's modular forms}
We will recall the definition of modular forms given by Katz in \cite{Katz1}. 
\begin{definition}  \label{def:Katzmodularforms}
A modular form of weight $k \in \Z$ and level $1$ over a commutative ring
$R_{0}$ is a rule $g$ that assigns to every pair $(\tilde{E}/R, \omega)$, where 
$\tilde{E}$ is and elliptic
curve over $Spec(R)$ for $R$ an $R_{0}$-algebra and $\omega$ is a nowhere vanishing section of $\Omega^{1}_{\tilde{E}/R}$ on $\tilde{E}$, an element $g(\tilde{E}/R,\omega) \in R$
that satisfies the following properties:

\begin{enumerate}
 \item $g(\tilde{E}/R,\omega)$ depends only on the $R$-isomorphism class of $(\tilde{E}/R,\omega)$.
 \item For any $\lambda \in R^{\times}$, 
 
 \[g(\tilde{E}/R,\lambda \omega)=\lambda^{-k} g(\tilde{E}/R,\omega) .\]
 
 \item $g(\tilde{E}/R,\omega)$ commutes with base change by morphisms of $R_{0}$-algebras.
\end{enumerate}

The space of modular forms of weight $k$ and level $1$ over $R_{0}$ is denoted by $\mathcal{M}(R_{0},k,1)$.

Given any $g \in \mathcal{M}(R_{0},k,1)$, we say that $g$ is holomorphic at 
$\infty$ if its $q$-expansion,  
\[g((Tate(q),\omega_{can})_{R_{0}} \in \Z((q))\otimes_{\Z} R_{0} ,\]
actually belongs to $\Z[[q]]\otimes_{\Z} R_{0}$. The submodule of all such 
elements will be denoted by $M(R_{0},k,1)$.

\end{definition}

\begin{remark}
 The reader should notice that the notations used here are not the same as the 
ones used by Katz.
\end{remark}

In the rest of the article we will only consider the case when $R_{0}=\overline{\F}$, for $p \ge 5$ a prime number.

In \cite{Ser} and \cite{Ser2} Serre considers the space  of modular forms modulo $p$ of weight $k$ and level $1$  as the space consisting of all elements of $\overline{\F}[[q]]$ that are the reduction modulo $p$ of the $q$-expansions  of elements in $M_{k}$ that have $p$-integer coefficients. The following proposition shows that under mild assumptions, this definition agrees with the previous definition.

\begin{proposition}[\cite{Edixhoven2} Lemma $1.9$] Let $k \ge 2$ and $p \ge 5$. 
Then, the natural map
 
 \[ M(\overline{\Z_{p}},k,1) \rightarrow M(\overline{\F},k,1) ,\]
 is surjective.
\end{proposition}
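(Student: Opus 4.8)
The plan is to translate both modules into the language of the Hodge bundle on the compactified moduli stack, reduce the surjectivity to a single cohomological vanishing, and then prove that vanishing by descending to the coarse space, where the hypotheses $p\ge 5$ and $k\ge 2$ enter decisively.

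First I would pass from Katz's functorial definition to sections of a line bundle. Let $\overline{\mathcal{M}}=\overline{\mathcal{M}}_{1,1}$ be the compactified moduli stack of generalized elliptic curves, viewed over $\Z[1/6]$ (legitimate since $p\ge 5$ makes $6$ invertible in every ring in sight), and let $\omega$ be the Hodge bundle, i.e. the sheaf of invariant relative differentials. The Deligne--Rapoport--Katz description of modular forms gives, for a $\Z[1/6]$-algebra $R_0$, a canonical identification
\[
M(R_0,k,1)\cong H^0(\overline{\mathcal{M}}_{R_0},\omega^{\otimes k}),
\]
in which holomorphicity at $\infty$ corresponds exactly to extending a section across the cusp (the $q$-expansion being evaluation against the Tate curve with its canonical differential). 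Under this dictionary the map of the statement becomes the base-change map for $H^0(\omega^{\otimes k})$ along $\overline{\Z_p}\to\overline{\F}$.

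Next I would reduce surjectivity to a vanishing statement. Write $\pi\colon\overline{\mathcal{M}}_{\overline{\Z_p}}\to\mathrm{Spec}\,\overline{\Z_p}$ for the structure map; since $\overline{\mathcal{M}}$ is smooth over the base, $\omega^{\otimes k}$ is flat over $\overline{\Z_p}$. The theorem on cohomology and base change (applied over $\overline{\Z_p}$, or over a DVR after descent to a finite extension followed by a colimit) then says that if $H^1(\overline{\mathcal{M}}_{\overline{\F}},\omega^{\otimes k})=0$, then $\pi_*\omega^{\otimes k}$ is locally free and its formation commutes with base change. In particular the reduction map $H^0(\overline{\mathcal{M}}_{\overline{\Z_p}},\omega^{\otimes k})\to H^0(\overline{\mathcal{M}}_{\overline{\F}},\omega^{\otimes k})$ factors as $H^0(\overline{\Z_p})\twoheadrightarrow H^0(\overline{\Z_p})\otimes_{\overline{\Z_p}}\overline{\F}\xrightarrow{\sim}H^0(\overline{\F})$ and is therefore surjective. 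So everything reduces to proving $H^1(\overline{\mathcal{M}}_{\overline{\F}},\omega^{\otimes k})=0$ for $k\ge 2$.

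This cohomological vanishing is the main obstacle, and it is exactly where $p\ge 5$ and $k\ge 2$ are used. I would compute it by descending along the coarse map $c\colon\overline{\mathcal{M}}_{\overline{\F}}\to\mathbb{P}^1_j$. Because $p\ge 5$, the automorphism groups at the stacky points — $\mu_6$ at $j=0$, $\mu_4$ at $j=1728$, and the generic $\mu_2=\{\pm 1\}$ — all have order prime to $p$, hence are linearly reductive; thus $R^{>0}c_*=0$ on quasi-coherent sheaves and $H^i(\overline{\mathcal{M}}_{\overline{\F}},\omega^{\otimes k})=H^i(\mathbb{P}^1,c_*\omega^{\otimes k})$ for all $i$. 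For odd $k$ the element $-1$ acts by $-1$ on $\omega^{\otimes k}$, so $c_*\omega^{\otimes k}=0$ and the vanishing is trivial. For even $k$, $c_*\omega^{\otimes k}$ is a line bundle on $\mathbb{P}^1$ whose degree a direct computation from the orbifold structure (the floor corrections contributed at $j=0$, $j=1728$, and the cusp) gives as $\lfloor k/12\rfloor$ when $k\not\equiv 2\pmod{12}$ and $\lfloor k/12\rfloor-1$ when $k\equiv 2\pmod{12}$; for every $k\ge 2$ this is $\ge -1$. Since $H^1(\mathbb{P}^1,\mathcal{O}(d))=0$ for all $d\ge -1$, the vanishing follows, and the borderline case $k=2$ (degree exactly $-1$) is precisely why the bound $k\ge 2$ is the right one. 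I expect the degree bookkeeping at the elliptic points to be the only delicate part: one must track the orbifold weights so the floors come out correctly and check that, for $p\ge 5$, these corrections are unchanged in characteristic $p$. (An alternative, stack-free route uses that over $\Z[1/6]$ the graded ring of modular forms is the free polynomial ring on $E_4,E_6$, so each $M(\overline{\Z_p},k,1)$ is free with the monomial basis $E_4^aE_6^b$, $4a+6b=k$; combined with the $q$-expansion principle over $\overline{\F}$ to see these reductions remain a basis, this again yields surjectivity. The cohomological argument is cleaner and makes the role of $k\ge 2$ transparent.)
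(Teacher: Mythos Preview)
The paper does not give its own proof of this proposition; it simply cites Edixhoven's Lemma~1.9 and moves on. So there is no in-paper argument to compare against.

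Your cohomological proof is correct. Identifying $M(R_0,k,1)$ with $H^0(\overline{\mathcal{M}}_{R_0},\omega^{\otimes k})$, reducing surjectivity of the reduction map to the vanishing of $H^1(\overline{\mathcal{M}}_{\overline{\F}},\omega^{\otimes k})$ via cohomology and base change, and computing this $H^1$ by pushing forward along the coarse-space map $c$ to $\mathbb{P}^1_j$ (using that for $p\ge 5$ the stacky automorphism groups have orders $2,4,6$ prime to $p$, so $R^{>0}c_*=0$) are all sound. The identification $c_*\omega^{\otimes k}\cong\mathcal{O}(m(k))$ for even $k$, with $m(k)\ge -1$ for $k\ge 2$, is correct and pins down exactly why $k\ge 2$ is the right threshold. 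The non-Noetherian nature of $\overline{\Z_p}$, which you address by descending to a DVR inside $\overline{\Z_p}$ and passing to a colimit, is the only genuine technicality, and your remedy is the standard one. The alternative you sketch at the end---using that over $\Z[1/6]$ the graded ring of level-$1$ modular forms is polynomial on $E_4,E_6$, so an explicit monomial basis lifts---is more elementary and avoids stacks entirely; either route is acceptable, and the second is closer in spirit to how such lemmas are often dispatched in the literature.
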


\begin{example}
Given $p \ge 5$, and an elliptic curve $\tilde{E}/\overline{\F}$ we can write an equation for $\tilde{E}$ of the form

\[ \tilde{E}: y^2=x^3-27c_{4}-54c_{6}  .\]

It is equipped with a canonical nowhere vanishing differential $\omega_{can}= \frac{dx}{y}$.

 \begin{itemize}

  \item $E_{4}(\tilde{E}/\overline{\F},\omega_{can}):= c_{4}$ 
defines an element  in $M(\overline{\F},4,1)$ whose $q$-expansion is the same 
as 
the
the reduction modulo $p$ of the classical Eisenstein series $E_{4}$.
   \item $E_{6}(\tilde{E}/\overline{\F},\omega_{can}):= c_{6}$ 
defines an element  in $M(\overline{\F},6,1)$ whose $q$-expansion is the same 
as 
the
the reduction modulo $p$ of the classical Eisenstein series $E_{6}$.
\item  $\Delta(\tilde{E}/\overline{\F},\omega_{can}):= 
\frac{c_{4}^3-c_{6}^2}{1728}=\Delta(\tilde{E})$ defines an element  in 
$M(\overline{\F},12,1)$ whose $q$-expansion is the same as the
the reduction modulo $p$ of the classical cuspform $\Delta$.
\item  $j(\tilde{E}/\overline{\F},\omega_{can}):= 
\frac{c_{4}^3}{\Delta}=j(\tilde{E})$ defines an element  in 
$\mathcal{M}(\overline{\F},0,1)$ whose $q$-expansion is the same as the
the reduction modulo $p$ of the classical $j$-invariant.
 \end{itemize}

\end{example}

\begin{proposition} \label{prop:nonvanishing}
 Given $\tilde{E} / \overline{\F}$ an elliptic curve and $\omega$ a nowhere 
vanishing differential on $\tilde{E}$, the following holds:
 \begin{itemize}
  \item  $\Delta(\tilde{E},\omega)$ never vanishes.
  \item  $ E_{4}(\tilde{E},\omega)$ vanishes if and only if $j(\tilde{E})=0$.
  \item $E_{6}(\tilde{E},\omega)$ vanishes if and only if $j(\tilde{E})=1728$.
  \item $j((\tilde{E},\omega))=j(\tilde{E})$, i.e., it only depends on the isomorphism class of $\tilde{E}$.
  
 \end{itemize}

\end{proposition}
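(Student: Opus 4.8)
The plan is to reduce everything to the canonical differential $\omega_{can}$ and then exploit the weight-$k$ homogeneity recorded in property (2) of Definition \ref{def:Katzmodularforms}. First I would observe that, since $\tilde{E}/\overline{\F}$ is an elliptic curve, its space of global regular differentials is one-dimensional and is generated by the nowhere vanishing differential $\omega_{can}=dx/y$; because the only global regular functions on a projective curve are the constants, any nowhere vanishing differential $\omega$ on $\tilde{E}$ can be written as $\omega=\lambda\,\omega_{can}$ for a unique $\lambda\in\overline{\F}^{\times}$. This identification is the only geometric input of the argument, and while it is the step I would state most carefully, it is entirely standard.

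Granting this, property (2) immediately yields
\[
E_{4}(\tilde{E},\omega)=\lambda^{-4}c_{4},\qquad E_{6}(\tilde{E},\omega)=\lambda^{-6}c_{6},\qquad \Delta(\tilde{E},\omega)=\lambda^{-12}\Delta(\tilde{E}),
\]
and, since $j$ has weight $0$, one gets $j(\tilde{E},\omega)=\lambda^{0}j(\tilde{E})=j(\tilde{E})$, which is precisely the fourth bullet. Because $\lambda\in\overline{\F}^{\times}$, the scalars $\lambda^{-4},\lambda^{-6},\lambda^{-12}$ are all nonzero, so the vanishing of each of these quantities is governed solely by the vanishing of $c_{4}$, $c_{6}$, or $\Delta(\tilde{E})$, independently of the choice of $\omega$.

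For the first bullet I would invoke smoothness: an elliptic curve has nonzero discriminant, so $\Delta(\tilde{E})\neq 0$, and hence $\Delta(\tilde{E},\omega)=\lambda^{-12}\Delta(\tilde{E})$ never vanishes. For the second bullet, the relation $j(\tilde{E})=c_{4}^{3}/\Delta(\tilde{E})$ together with $\Delta(\tilde{E})\neq 0$ shows that $c_{4}=0$ if and only if $j(\tilde{E})=0$, which by the computation above is equivalent to the vanishing of $E_{4}(\tilde{E},\omega)$. For the third bullet I would use the identity
\[
j(\tilde{E})-1728=\frac{c_{4}^{3}-1728\,\Delta(\tilde{E})}{\Delta(\tilde{E})}=\frac{c_{6}^{2}}{\Delta(\tilde{E})},
\]
which follows from $1728\,\Delta(\tilde{E})=c_{4}^{3}-c_{6}^{2}$; since $\Delta(\tilde{E})\neq 0$ this gives $c_{6}=0$ if and only if $j(\tilde{E})=1728$, equivalently the vanishing of $E_{6}(\tilde{E},\omega)$. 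All four assertions then follow at once, and I do not expect any genuinely hard step: the entire content is the passage from $\omega$ to $\lambda\,\omega_{can}$ plus the two standard identities for $j$ and $j-1728$.
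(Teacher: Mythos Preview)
Your proof is correct and follows the same approach as the paper: reduce to the canonical differential via the weight homogeneity in Definition~\ref{def:Katzmodularforms}(2), then invoke the standard identities $j=c_{4}^{3}/\Delta$ and $j-1728=c_{6}^{2}/\Delta$ together with the nonvanishing of the discriminant of an elliptic curve. The paper's own proof is a one-line version of this (``$\Delta(\tilde{E},\omega_{can})$ is the discriminant, hence nonzero; the remaining statements are analogous''), so you have simply made explicit what the paper leaves to the reader.
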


\begin{proof}
 If we evaluate $\Delta(\tilde{E}, \omega_{can})$ we recover the discriminant of 
$\tilde{E}$. This is non-zero as, by definition, an elliptic curve is 
non-singular. The remaining statements are analogous.
\end{proof}

Now we have the ingredients to prove the following proposition that relates the zeros of the divisor polynomial of $E$ with the zeros of the modular form $F_E$
modulo $p$.

\begin{proposition} \label{prop:zerosmodp}
 Given $\tilde{E}/\overline{\F}$ an elliptic curve with a nowhere vanishing invariant differential $\omega$ we have that
 
 \[ F(F_{E},j(\tilde{E})) \equiv 0 \bmod{p}  \iff F_{E}( \tilde{E}, \omega)= 0  .\]
 
 
\end{proposition}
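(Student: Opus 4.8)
The plan is to exploit the relationship between the divisor polynomial $F(g,x)$ and the quotient $g/(\Delta^{m(k)}\tilde{E}_k)$ that defines $\tilde{F}(g,j)$, interpreting everything through Katz's evaluation of modular forms at $(\tilde{E},\omega)$. The key observation is that by Proposition~\ref{prop:nonvanishing}, the factors appearing in $h_k(x)$ correspond precisely to the vanishing loci of $E_4$ and $E_6$ (via $j=0$ and $j=1728$), and $\Delta$ never vanishes. So the strategy is to show that evaluating the defining identity for $\tilde{F}$ at a supersingular curve converts the statement ``$F(F_E,j)\equiv 0$'' into the statement ``$F_E(\tilde{E},\omega)=0$,'' with the auxiliary factors being nonzero and hence harmless.

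First I would work with $g=F_E\in M_{p+1}(\SL_2(\Z))$, so $k=p+1$, and reduce everything modulo $p$ to land in $M(\overline{\F},p+1,1)$ via the surjectivity proposition of Edixhoven. Evaluating the defining relation
\[
\frac{F_E(\tau)}{\Delta(\tau)^{m(k)}\tilde{E}_k(\tau)}=\tilde{F}(F_E,j(\tau))
\]
at the pair $(\tilde{E},\omega)$ for a supersingular $\tilde{E}/\overline{\F}$, I would use that Katz's forms are multiplicative and that $j(\tilde{E},\omega)=j(\tilde{E})$ depends only on the isomorphism class. This yields
\[
F_E(\tilde{E},\omega)=\Delta(\tilde{E},\omega)^{m(k)}\,\tilde{E}_k(\tilde{E},\omega)\,\tilde{F}(F_E,j(\tilde{E})).
\]
Since $\Delta(\tilde{E},\omega)$ never vanishes (Proposition~\ref{prop:nonvanishing}), the factor $\Delta^{m(k)}$ is a unit and can be discarded. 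Multiplying through by $h_k(j(\tilde{E}))$ and using $F(F_E,x)=h_k(x)\tilde{F}(F_E,x)$, I would obtain a clean relation between $F(F_E,j(\tilde{E}))$ and $F_E(\tilde{E},\omega)$ up to the explicit factor $\tilde{E}_k(\tilde{E},\omega)\cdot h_k(j(\tilde{E}))^{-1}$ or its analogue, depending on how the powers of $E_4,E_6$ in $\tilde{E}_k$ match the factors $x,x-1728$ in $h_k$.

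The main obstacle, and the step requiring genuine care, is verifying that the combination of $\tilde{E}_k(\tilde{E},\omega)$ and $h_k(j(\tilde{E}))$ introduces no spurious zeros or poles at the relevant $j$-invariants. This is a finite case check over the six residue classes of $k \bmod 12$. In each class one must confront the possibility that $j(\tilde{E})=0$ or $j(\tilde{E})=1728$, where $E_4$ or $E_6$ vanishes. The factors $h_k(x)$ were precisely designed to clear these, so I would check case-by-case that $\tilde{E}_k(\tilde{E},\omega)$ vanishes to exactly the order prescribed by $h_k$ at $j=0$ and $j=1728$, using the vanishing criteria of Proposition~\ref{prop:nonvanishing}. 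The subtle point is that supersingular $j$-invariants can equal $0$ (when $p\equiv 2\bmod 3$) or $1728$ (when $p\equiv 3\bmod 4$), so these degenerate cases genuinely occur and cannot be dismissed; one must confirm that after clearing the $\Delta$ power, the remaining factor $h_k(j(\tilde{E}))\tilde{F}(F_E,j(\tilde{E}))=F(F_E,j(\tilde{E}))$ vanishes if and only if $F_E(\tilde{E},\omega)$ does. Once this bookkeeping is complete in all six cases, the equivalence in both directions follows immediately.
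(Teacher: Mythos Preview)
Your proposal is correct and follows essentially the same route as the paper: evaluate the defining identity $F_E/(\Delta^{m(k)}\tilde{E}_k)=\tilde{F}(F_E,j)$ at $(\tilde{E},\omega)$, use Proposition~\ref{prop:nonvanishing} to discard the $\Delta$-factor, and then separately handle the cases $j(\tilde{E})\ne 0,1728$ (where $\tilde{E}_k$ and $h_k$ are both nonzero) and $j(\tilde{E})\in\{0,1728\}$ (where the vanishing of $\tilde{E}_k$ is exactly compensated by $h_k$). One small slip: you restrict to supersingular $\tilde{E}$, but the proposition as stated applies to arbitrary elliptic curves over $\overline{\F}$, and your argument works equally well in that generality.
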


\begin{proof}
Suppose that $j(\tilde{E}) \not= 0,1728$. Consider  
\[ \frac{F_E}{\Delta^{m(k)} 
\tilde{E}_{k}}= \tilde{F}(F_{E},j(-)) \in 
\mathcal{M}(\overline{\F},0,1).  \]
It can be evaluated at pairs $( \tilde{E}, \omega)$, but since
 it is has weight zero it depends only on the isomorphism class of $\tilde{E}$. Therefore it only depends on the $j$-invariant of the elliptic curve. Note that
by Proposition \ref{prop:nonvanishing} the denominator does not vanish and the result follows. If $j=0$ or $j=1728$ an analogous argument shows the proposition, as
$F(F_{E},x)=h_{k}(x)\tilde{F}(F_{E},x)$, and $h_{k}$ takes into account the vanishing of these special $j$-invariants.
 
\end{proof}

\subsection{The spaces $S(\overline{\F},k,1)$ }

Following \cite{Serre2}, we introduce a definition:

\begin{definition} \label{def:supersingular}
 ${S}(\overline{\F},k,1)$ is the space of rules $g$ that assign to every 
pair $(\tilde{E}/\overline{\F}, \omega)$, where $\tilde{E}$ is a 
\textbf{supersingular} elliptic
curve and $\omega$ is a nowhere vanishing differential on $\tilde{E}$, an 
element $g(\tilde{E}/\overline{\F},\omega) \in \overline{\F}$
that satisfies the same properties as in Definition \ref{def:Katzmodularforms}.

\end{definition}

\begin{definition} \label{def:Hecke}
For $\ell \neq p$ a prime number we define the Hecke operator $T_{\ell}$ acting on $S(\overline{\F},k,1)$ as

\[  (g \mid_{T_{\ell}})(\tilde{E},\omega)= \frac{1}{\ell} \sum_{C} g(\tilde{E}/C, {\pi_{C}}_{\ast} \omega)  ,\]

where the sum is taken over the $\ell+1$ subgroups of $\tilde{E}$ of order $\ell$ and  $\pi_{C}: \tilde{E} \rightarrow  \tilde{E}/C$ is the corresponding isogeny.
 
\end{definition}

\begin{proposition} \label{inclusion}
We have a natural inclusion $M(\overline{\F},k,1) \subset {S}(\overline{\F},k,1)$. If $g \in M(\overline{\F},k,1)$ is an eigenform for the Hecke operators $T_{\ell}$ ($\ell \neq p$)
with eigenvalues $a_{\ell} \in \overline{\F}$, then, the image of $g$ in $ {S}(\overline{\F},k,1)$ is an eigenform for the Hecke operators with the same
eigenvalues $a_{\ell}$.
 \end{proposition}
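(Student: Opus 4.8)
The plan is to take the map in question to be the obvious restriction: given $g\in M(\overline{\F},k,1)$, viewed as a rule on pairs $(\tilde E/R,\omega)$, one forgets every pair whose $\tilde E$ is ordinary and keeps only the values $g(\tilde E,\omega)$ on supersingular pairs. First I would check that this restriction lands in $S(\overline{\F},k,1)$. Each of the conditions (1)--(3) of Definition \ref{def:Katzmodularforms} is a requirement quantified over a class of pairs, so restricting the quantifier to the subclass of supersingular pairs preserves each condition verbatim; in particular the base-change condition (3) survives because base change of a supersingular family stays supersingular fibrewise. This produces the natural $\overline{\F}$-linear map $\rho\colon M(\overline{\F},k,1)\to S(\overline{\F},k,1)$.

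Next I would establish the Hecke-equivariance, which I expect to be the clean part. The operator $T_\ell$ on $M(\overline{\F},k,1)$ is given by the identical moduli-theoretic recipe as in Definition \ref{def:Hecke}, namely $(g\mid_{T_\ell})(\tilde E,\omega)=\frac1\ell\sum_{C}g(\tilde E/C,\pi_{C,\ast}\omega)$, the sum running over the $\ell+1$ order-$\ell$ subgroups $C\subset\tilde E$. The key geometric input is that for $\ell\neq p$ supersingularity is an isogeny invariant: each quotient $\tilde E/C$ is $\ell$-isogenous to $\tilde E$ and hence supersingular (isogenous curves over $\overline{\F}$ share the endomorphism algebra, the quaternion algebra $B$ ramified at $p$ and $\infty$), while $\pi_{C,\ast}\omega$ is again nowhere vanishing since $\pi_C$ is separable and therefore induces an isomorphism on invariant differentials. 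Thus, when $\tilde E$ is supersingular, the right-hand side involves $g$ only at supersingular pairs, so $\rho(g\mid_{T_\ell})=\rho(g)\mid_{T_\ell}$; that is, $\rho$ intertwines the two actions. Consequently, if $g\mid_{T_\ell}=a_\ell g$ then $\rho(g)\mid_{T_\ell}=\rho(g\mid_{T_\ell})=a_\ell\,\rho(g)$, which is exactly the eigenform statement, with the same eigenvalues $a_\ell$.

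The main obstacle is the injectivity implicit in the symbol ``$\subset$''. On $\overline{\F}$-points alone it genuinely fails: the Hasse invariant $A\equiv E_{p-1}$ is a nonzero element of $M(\overline{\F},p-1,1)$ vanishing at every supersingular $j$-invariant, so its naive restriction is $0$. The resolution I would pursue is that, since $S(\overline{\F},k,1)$ inherits the base-change condition (3), its rules must be evaluated on supersingular curves over all $\overline{\F}$-algebras, including the infinitesimal thickenings $\overline{\F}[t]/t^n$ whose unique fibre is supersingular. Evaluating $g$ on the order-$n$ truncation of the universal deformation of a supersingular curve $\tilde E_0$ (trivializing $\omega^{\otimes k}$ by a nowhere-vanishing differential on the family) records the image of $g$ in the local ring of the level-one moduli stack at $\tilde E_0$; in particular $\rho(A)\neq 0$, because $A$ is a nonzerodivisor there. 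Vanishing of $\rho(g)$ then forces $g$ to vanish to all orders at $\tilde E_0$, i.e.\ in the completion $\overline{\F}[[t]]$, and since the moduli stack is integral with regular local rings at supersingular points (which inject into their completions), this gives $g=0$; the $q$-expansion principle supplies the standard framework for this detection. Making precise exactly this deformation-theoretic content of $S$, as opposed to the pointwise reading for which $\rho$ is non-injective, is the crux of the argument.
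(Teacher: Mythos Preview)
Your argument for the existence of the restriction map and its Hecke-equivariance is exactly what the paper intends: the paper's entire proof reads ``This is clear from the definitions,'' and your unpacking of that sentence (supersingularity is preserved under $\ell$-isogeny for $\ell\neq p$, and $\pi_{C,\ast}\omega$ is again nowhere vanishing, so the Hecke formula computed on the $S$-side uses only values that the restriction already records) is correct and is all the authors have in mind.

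Where you diverge from the paper is in taking the word ``inclusion'' seriously. The paper does not argue injectivity at all, and your Hasse-invariant observation shows that, under the literal reading of Definition~\ref{def:supersingular} (rules on supersingular curves over $\overline{\F}$ only), the restriction is \emph{not} injective in general weight: $E_{p-1}\in M(\overline{\F},p-1,1)$ is nonzero but vanishes at every supersingular point. Your proposed rescue---reinterpreting $S(\overline{\F},k,1)$ to allow evaluation on infinitesimal thickenings so that the germ of $g$ at each supersingular point is recorded---is a coherent way to force injectivity, but it is not what the paper's definition says, and the paper never invokes anything of the sort. In practice the authors only use the map, in weight $p+1$, to transport the Hecke eigenvalues of $F_E$ to $S(\overline{\F},p+1,1)$; injectivity plays no role downstream. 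So: your equivariance argument matches the paper, and your extended discussion of injectivity is a legitimate (and more careful) analysis of a point the paper simply elides with the word ``inclusion.''
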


\begin{proof}
 This is clear from the definitions.
\end{proof}

We have the following proposition  that allows us to 
shift from weight $p+1$ to weight $0$.

\begin{proposition}[\cite{Robert}, Lemma ~6] \label{prop:Robert}
 The map from ${S}(\overline{\F},0,1) \rightarrow {S}(\overline{\F},p+1,1)$ given by multiplication by $E_{p+1}$ induces an isomorphism of Hecke modules
 
\[{S}(\overline{\F},0,1)[1] \cong {S}(\overline{\F},p+1,1),  \]

where  ${S}(\overline{\F},0,1)[1]$ denotes the Tate twist. More precisely we have that for all $g \in {S}(\overline{\F},0,1)$,

\[ \ell E_{p+1}.  (g\mid_{T_{\ell}}) = (g.E_{p+1}) \mid_{T_\ell} .\]
\end{proposition}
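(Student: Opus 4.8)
The plan is to prove two separate assertions: first, that multiplication by $E_{p+1}$ is an isomorphism of $\overline{\F}$-vector spaces $S(\overline{\F},0,1)\to S(\overline{\F},p+1,1)$, and second, the twist identity $\ell E_{p+1}\cdot(g\mid_{T_\ell})=(g\cdot E_{p+1})\mid_{T_\ell}$, which is what upgrades the vector space isomorphism to an isomorphism of Hecke modules carrying the Tate twist. Both spaces are finite dimensional: a weight-$0$ rule is just a function on the finite set of isomorphism classes of supersingular curves, while a weight-$(p+1)$ rule is a section of $\omega^{p+1}$ over the same finite set. Since multiplication by $E_{p+1}$ sends a weight-$0$ rule to a weight-$(p+1)$ rule, the first assertion reduces entirely to showing that $E_{p+1}(\tilde E,\omega)\neq 0$ for every supersingular $\tilde E$: granting this, the map is injective, and one recovers any $h\in S(\overline{\F},p+1,1)$ as $E_{p+1}\cdot(h/E_{p+1})$ by dividing pointwise, so it is also surjective.

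For the nonvanishing I would use the congruence $E_{p+1}\equiv E_2 \pmod p$, which follows from Fermat's congruence $\sigma_p(n)\equiv\sigma_1(n)$ together with the Kummer congruence $B_{p+1}/(p+1)\equiv B_2/2=1/12\pmod p$ applied to the $q$-expansion of $E_{p+1}$. The value of $E_{p+1}$ on the supersingular locus is then governed by the Hasse invariant $A=E_{p-1}$, whose zeros are exactly the supersingular points and are all simple. The key input — which I would cite as known (this is the content of the cited lemma, going back to Serre and Deligne) — is that $E_{p+1}$ and $A$ have no common zero on the characteristic-$p$ modular curve, equivalently that $E_{p+1}$ does not vanish where $A$ does; I expect that reproducing this rests on the simplicity of the zeros of $A$ and on $E_{p+1}\equiv E_2=\theta\Delta/\Delta$, where $\theta=q\,d/dq$ and $\Delta$ is everywhere nonzero by Proposition \ref{prop:nonvanishing}.

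For the twist identity I would unwind both sides through Definition \ref{def:Hecke}. Writing the $\ell+1$ order-$\ell$ subgroups as $C$, one has
\[
(g\cdot E_{p+1})\mid_{T_\ell}(\tilde E,\omega)=\frac1\ell\sum_{C}g(\tilde E/C,\pi_{C\ast}\omega)\,E_{p+1}(\tilde E/C,\pi_{C\ast}\omega),
\]
while $\ell E_{p+1}\cdot(g\mid_{T_\ell})(\tilde E,\omega)=E_{p+1}(\tilde E,\omega)\sum_{C}g(\tilde E/C,\pi_{C\ast}\omega)$. Since $g$ has weight $0$, its value depends only on the isomorphism class of $\tilde E/C$, so the identity for all $g$ is equivalent to the statement that, grouping the subgroups $C$ by the class of $\tilde E/C$, the differential-weighted values of $E_{p+1}$ average to $\ell\,E_{p+1}(\tilde E,\omega)$. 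To establish this I would exploit that $E_{p+1}$ is an honest geometric weight-$(p+1)$ form, so its full $T_\ell$-action is computable on the whole modular curve by $q$-expansions; using $E_{p+1}\equiv E_2=\theta\Delta/\Delta$ and the elementary twist $T_\ell\circ\theta=\ell\,\theta\circ T_\ell$ on $q$-expansions modulo $p$ (immediate from comparing coefficients, since $\ell^{p-1}\equiv1$), the factor of $\ell$ should appear precisely as the Tate twist.

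The main obstacle is the twist identity, and specifically the transfer of the $q$-expansion computation to the supersingular locus. On $S(\overline{\F},\ast,1)$ there is no cusp and hence no $q$-expansion principle, so one cannot verify the Hecke relation coefficientwise; moreover the product $g\cdot E_{p+1}$ does not extend to a form on the full modular curve, because $g$ is defined only at supersingular points. The delicate point is therefore to control the transformation of $E_{p+1}\equiv E_2$ under each individual $\ell$-isogeny in characteristic $p$ — in particular the quasimodular anomaly $E_2(z)-\ell E_2(\ell z)$ — and to show that, after summing over the $\ell+1$ subgroups and pairing against the weight-$0$ form $g$, this anomaly contributes exactly the clean factor $\ell$ rather than an extra error term.
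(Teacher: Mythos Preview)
The paper does not give a self-contained proof here: the proposition is stated with a citation to Robert, and the only argument the paper supplies is the short paragraph immediately following, which handles just the nonvanishing of $E_{p+1}$ on the supersingular locus. That paragraph is more direct than your route. Rather than passing through the Kummer congruence $E_{p+1}\equiv E_2$, it simply invokes Serre's result that the isobaric polynomials $A,B$ with $A(E_4,E_6)=E_{p-1}$ and $B(E_4,E_6)=E_{p+1}$ have coprime reductions mod~$p$; since $E_{p-1}$ vanishes exactly on supersingular curves, $E_{p+1}$ cannot vanish there. Your argument ultimately lands on the same cited fact after a detour, so on this point the two agree in substance and the paper's version is just shorter.

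For the Hecke twist identity the paper offers nothing beyond the citation, so there is no in-paper proof to compare against; the relevant question is whether your sketch stands on its own, and here your self-diagnosis is correct and fatal to the approach as written. Your reduction is fine: demanding the identity for every weight-$0$ rule $g$ forces, for each supersingular $\tilde E$ and each isomorphism class $[E']$ among its $\ell$-isogenous quotients, the relation
\[
\sum_{C:\ \tilde E/C\cong E'} E_{p+1}(\tilde E/C,\pi_{C\ast}\omega)\;=\;\ell\,\bigl|\{C:\tilde E/C\cong E'\}\bigr|\,E_{p+1}(\tilde E,\omega).
\]
But your proposed verification via $q$-expansions and $T_\ell\circ\theta=\ell\,\theta\circ T_\ell$ takes place at the cusp and only constrains forms defined on the entire mod-$p$ modular curve; the displayed relation is a statement about values at supersingular points, paired against a $g$ that has no extension off the supersingular locus and hence no $q$-expansion at all. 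Nothing in the sketch transports the cuspidal identity to those points. Robert's actual argument is geometric rather than $q$-expansion--based: it rests on an intrinsic description of $B\equiv E_{p+1}\pmod p$ on the supersingular locus (via the derivative of the Hasse invariant, equivalently the Kodaira--Spencer class) that makes its behaviour under an individual $\ell$-isogeny directly computable and produces the factor $\ell$. That ingredient is absent from your proposal, so the Hecke part remains a genuine gap.
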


If we consider the isobaric polynomials $A,B$ such that $A(E_4,E_6)=E_{p-1}$ 
and $B(E_4,E_6)=E_{p+1}$, the reductions $\tilde{A}$, $\tilde{B}$ have no 
common factor (\cite[Corollary ~1 of Theorem ~5]{Ser}).  Since $E_{p-1}$ 
vanishes  at supersingular elliptic curves we obtain that 
$E_{p+1}$ does not vanish at supersingular elliptic curves over 
$\overline{\F}$.

The reduction modulo $p$ of $F_{E}$ can be regarded as an element of 
${S}(\overline{\F},p+1,1)$, and by the above remarks we can consider

\[ \overline{F_{E}}= F_{E}/E_{p+1} .\]

Combining these results with Proposition \ref{prop:zerosmodp}  we obtain the 
following result.

\begin{proposition} \label{prop:zerosmodp2}
Given $\tilde{E}/\overline{\F}$ a \textbf{supersingular}  elliptic curve with a nowhere vanishing invariant differential $\omega$ we have that
 
 \[ F(F_{E},j(\tilde{E})) \equiv 0 \bmod{p}  \iff \overline{F_{E}}( \tilde{E})= 0 .\]
\end{proposition}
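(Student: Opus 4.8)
The plan is to combine the equivalence already established in Proposition \ref{prop:zerosmodp} with the non-vanishing of $E_{p+1}$ at supersingular points that was recorded in the remark immediately preceding this statement. First I would invoke Proposition \ref{prop:zerosmodp}, which for a supersingular curve $\tilde{E}/\overline{\F}$ equipped with a nowhere vanishing differential $\omega$ already gives
\[
F(F_E, j(\tilde{E})) \equiv 0 \bmod p \iff F_E(\tilde{E}, \omega) = 0,
\]
so that it suffices to show $F_E(\tilde{E}, \omega) = 0$ is equivalent to $\overline{F_E}(\tilde{E}) = 0$.

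To that end I would write out the defining relation $\overline{F_E} = F_E / E_{p+1}$ evaluated at the pair $(\tilde{E}, \omega)$, namely
\[
\overline{F_E}(\tilde{E}, \omega) = \frac{F_E(\tilde{E}, \omega)}{E_{p+1}(\tilde{E}, \omega)}.
\]
Since $F_E$ and $E_{p+1}$ both have weight $p+1$, the quotient $\overline{F_E}$ has weight $0$, and hence by the weight-$0$ transformation property of Definition \ref{def:Katzmodularforms} its value depends only on the isomorphism class of $\tilde{E}$; this justifies writing $\overline{F_E}(\tilde{E})$ with no reference to $\omega$. The one ingredient I need is that the denominator never vanishes at a supersingular curve, and this is exactly the content of the remark: since $\tilde{A}$ and $\tilde{B}$ (the reductions of the isobaric polynomials with $A(E_4, E_6) = E_{p-1}$ and $B(E_4, E_6) = E_{p+1}$) share no common factor, and $E_{p-1}$ cuts out precisely the supersingular locus, we have $E_{p+1}(\tilde{E}, \omega) \neq 0$ whenever $\tilde{E}$ is supersingular.

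With $E_{p+1}(\tilde{E}, \omega) \in \overline{\F}^{\times}$, division by it is invertible, so $\overline{F_E}(\tilde{E}, \omega) = 0$ if and only if $F_E(\tilde{E}, \omega) = 0$; chaining this with Proposition \ref{prop:zerosmodp} yields the desired equivalence. In truth there is no serious obstacle left at this stage: the substantive work—the passage from the divisor polynomial to the weight-$(p+1)$ form $F_E$ via Proposition \ref{prop:zerosmodp}, together with the identification of the supersingular locus as the zero locus of the Hasse invariant $E_{p-1}$ that underlies the non-vanishing of $E_{p+1}$—has already been carried out. The proposition is therefore essentially a bookkeeping step that packages these two facts, its only real subtlety being the observation that restricting to the supersingular locus is precisely what makes the division by $E_{p+1}$ legitimate and hence the twisting construction meaningful.
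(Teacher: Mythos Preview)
Your proof is correct and follows exactly the route the paper takes: the paper simply states that the proposition is obtained by ``combining these results with Proposition \ref{prop:zerosmodp}'', where ``these results'' refers to the non-vanishing of $E_{p+1}$ at supersingular points established in the preceding remark. Your write-up merely makes this one-line deduction explicit.
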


Finally, we state a proposition that will be useful later.

\begin{proposition} \label{prop:eigmodp}
 The element $\overline{F_{E}} \in {S}(\overline{\F},0,1)[1]$ has the same 
eigenvalues for $T_{\ell}$ ($\ell \neq p$) as $F_{E}$. In addition,
 it has the same eigenvalues modulo $p$ as $f_{E}$.
\end{proposition}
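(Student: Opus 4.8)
The plan is to first establish that the reduction of $F_E$ is a Hecke eigenform modulo $p$ carrying the eigenvalues of $f_E$, and then to transport this statement along the isomorphism of Proposition \ref{prop:Robert} down to $\overline{F_E}$. The starting point is Serre's congruence $F_E \equiv f_E \pmod p$ of $q$-expansions recalled in the introduction, together with the $q$-expansion principle, which guarantees that a level-$1$ Katz form over $\overline{\F}$ is determined by its $q$-expansion and that the geometric operator $T_\ell$ of Definition \ref{def:Hecke} acts on $q$-expansions by the classical weight-$k$ formula.

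The key arithmetic observation is that the weight enters that formula only through a factor $\ell^{k-1}$, and by Fermat's little theorem $\ell^{(p+1)-1} = \ell^p \equiv \ell = \ell^{2-1} \pmod p$; hence the weight-$(p+1)$ and weight-$2$ Hecke operators coincide modulo $p$ on $q$-expansions. With this in hand I would argue as follows. Since $f_E$ is a weight-$2$ eigenform with $T_\ell f_E = a(\ell) f_E$, applying the weight-$(p+1)$ operator to $F_E$ gives, on $q$-expansions,
\[ T_\ell F_E \equiv T_\ell f_E = a(\ell) f_E \equiv a(\ell) F_E \pmod p, \]
and the $q$-expansion principle promotes this congruence of $q$-expansions to an equality $T_\ell F_E = a(\ell) F_E$ in $M(\overline{\F}, p+1, 1)$. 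By Proposition \ref{inclusion}, the image of $F_E$ in $S(\overline{\F}, p+1, 1)$ remains a $T_\ell$-eigenform with eigenvalue $a(\ell) \bmod p$.

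Finally, since $\overline{F_E} \cdot E_{p+1} = F_E$ by definition, the Hecke-module isomorphism of Proposition \ref{prop:Robert} (multiplication by $E_{p+1}$) carries $\overline{F_E} \in S(\overline{\F}, 0, 1)[1]$ to $F_E \in S(\overline{\F}, p+1, 1)$; being Hecke-equivariant, it forces $\overline{F_E}$ to be an eigenform with exactly the eigenvalues of $F_E$ (the first assertion), which are the reductions modulo $p$ of the eigenvalues of $f_E$ (the second assertion). This yields both claims at once.

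The main obstacle is bookkeeping rather than conceptual. One must verify the compatibility of the geometric Hecke operators of Definition \ref{def:Hecke} (with their $\tfrac{1}{\ell}$ normalization and pushforward of differentials) with the classical action on $q$-expansions, which amounts to computing the quotients of the Tate curve by its $\ell+1$ subgroups of order $\ell$ and distinguishing the multiplicative subgroup from the \'etale ones. One must also track the Tate twist carefully: the factor of $\ell$ in the relation $\ell E_{p+1}\cdot(g\mid_{T_\ell}) = (g\cdot E_{p+1})\mid_{T_\ell}$ is precisely what the twist $[1]$ absorbs, so that eigenvalues match on the nose rather than differing by a power of $\ell$. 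Once these normalizations are pinned down, the remainder of the argument is routine.
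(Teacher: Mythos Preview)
Your proposal is correct and follows essentially the same route as the paper. The paper's proof is extremely terse: it cites Proposition~\ref{prop:Robert} for the first assertion and ``the discussion given in the introduction'' (Serre's congruence $F_E\equiv f_E\pmod p$) for the second; you have simply unpacked what those citations entail, in particular making explicit the Fermat-little-theorem identification $\ell^{p}\equiv\ell\pmod p$ of the weight-$(p+1)$ and weight-$2$ Hecke operators on $q$-expansions, and the bookkeeping for the Tate twist, both of which the paper leaves to the reader.
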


\begin{proof}
The first part follows from  Proposition \ref{prop:Robert} while the second part 
follows from the discussion given in the introduction.

\end{proof}
\subsection{Modular forms on quaternion algebras}
We will recall some of the results previously stated in the introduction. This exposition follows entirely the fundamental work of Gross \cite{Gross}.
The geometric fiber of the curve $X_0(p)$ in characteristic $p$ is the union of two rational curves meeting at $n$ ordinary double points: $e_1$, $e_2$, \ldots, $e_{n}$ that are in bijective correspondence with the isomorphism classes of supersingular elliptic curves $E_i$.
 Recall that $\X$ is the free $\Z$-module of divisors supported on the $e_i$ with a $\Z$-bilinear pairing 
\[
\langle,\rangle:\X\times\X \rightarrow \Z,
\]
given by $\langle e_i,e_j \rangle=w_i \delta_{i,j}$ for all $i,j \in \{1,\ldots,n \}$,
where $w_{i}=\frac{1}{2}\#\textrm{Aut}(E_i)$.

This pairing identifies ${\X}^{*}=Hom(\X,\Z)$ with the subgroup of $\X \otimes \Q$ with basis $e^{*}_{i}=\frac{e_{i}}{w_{i}}$.

The action of Hecke correspondences on the set of $e_i$ induces an action on $\X$. Explicitly, the action of the correspondence $t_m$ ($m\ge 1$) is given by the transpose  of the Brandt matrix $B(m)$
\[
t_m e_i = \sum_{j=1}^n B_{ij}(m)e_j,
\]

where $B_{ij}(m)$ is the number of subgroups schemes of order $m$ in $E_{i}$ such that $E_{i}/C \simeq E_j$.
Furthermore, the pairing is Hecke compatible \cite[Proposition ~4.6]{Gross}.

Let $M_2$ be the $\Z$-module consisting of holomorphic modular forms for the 
group $\Gamma_{0}(p)$ such that when we consider its $q$-expansion, all 
coefficients are integers except maybe the coefficient $a_0$ which is only 
required to be in $\Z[1/2]$.
 The Hecke algebra $\mathbb{T}=\Z[ \cdots , T_m,  \cdots ]$ acts on $M_2$  by the classical formulas. Moreover, we have that as endomorphisms of
$M_2$ 
\[T_{p}+W_{p}=0, \] where $W_p$ is the Atkin-Lehner involution. In addition, 
the map given by $T_m \rightarrow  t_m$ defines an isomorphism of Hecke 
algebras.

\begin{proposition}[\cite{Gross}, Proposition ~5.6]\label{prop:Gross}
The map $\phi: \X \otimes_{\mathbb{T}} \X \rightarrow M_2$ given by

 \[ \phi(e,f)=\frac{deg(e)deg(f)}{2} + \sum_{m \ge 1}  \langle t_{m}e,f \rangle q^m, \]
  defines a $\mathbb{T}$-morphism which becomes an isomorphism over $\mathbb{T} \otimes \Q$.
\end{proposition}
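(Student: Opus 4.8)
The plan is to establish the three assertions in order: that $\phi$ takes values in $M_2$, that it descends to a $\mathbb{T}$-morphism, and that it becomes an isomorphism after $\otimes\,\Q$. Throughout I will use the facts already recorded above, namely $t_m e_i=\sum_j B_{ij}(m)e_j$, $\langle e_i,e_j\rangle=w_i\delta_{ij}$, the Hecke-compatibility of the pairing, and the isomorphism $T_m\mapsto t_m$ of Hecke algebras.

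For well-definedness, by $\Z$-bilinearity it suffices to treat a basis pair $(e_i,e_j)$. Since $\deg e_i=1$ and $\langle t_m e_i,e_j\rangle=w_j B_{ij}(m)$, one has
\[
\phi(e_i,e_j)=\tfrac12+\sum_{m\ge1}w_j B_{ij}(m)\,q^m .
\]
I would identify this with one half of the theta series $\theta_{ij}(\tau)=\sum_{\psi\in\operatorname{Hom}(E_i,E_j)}q^{\deg\psi}$ of the rank-four lattice $\operatorname{Hom}(E_i,E_j)$ equipped with its positive-definite degree form, using that the number of $\psi$ of degree $m$ equals $2w_jB_{ij}(m)$ for $m\ge1$ (checked e.g. at $m=1$, where both sides count $\#\operatorname{Aut}(E_i)=2w_i$), while $\psi=0$ supplies the constant term. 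Classical Eichler--Brandt theory then guarantees that such a quaternary theta series is a holomorphic weight-$2$ form on $\Gamma_0(p)$, the level being dictated by the reduced discriminant $p$ of the degree form. Integrality of the coefficients $w_jB_{ij}(m)\in\Z$ together with the constant term $\tfrac12\in\Z[1/2]$ then places $\phi(e_i,e_j)$ in $M_2$.

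For the Hecke statement I must check two things. First, to factor through $\X\otimes_\mathbb{T}\X$ I need $\phi(t_\ell e,f)=\phi(e,t_\ell f)$; this is exactly the self-adjointness of the correspondences under $\langle,\rangle$ (Hecke-compatibility of the pairing, \cite[Prop.~4.6]{Gross}), since the $m$-th coefficients are $\langle t_mt_\ell e,f\rangle$ and $\langle t_m e,t_\ell f\rangle$ and the $t_m$ commute and are self-adjoint. Second, for equivariance I would compare $q$-expansions in $\phi(t_\ell e,f)=T_\ell\phi(e,f)$ for $\ell\neq p$: the $m$-th coefficient on the left is $\langle t_mt_\ell e,f\rangle$, and the multiplication law $t_\ell t_m=\sum_{d\mid\gcd(\ell,m)}d\,t_{\ell m/d^2}$ reproduces exactly $\langle t_{\ell m}e,f\rangle+\ell\langle t_{m/\ell}e,f\rangle$, the classical weight-$2$ formula for $T_\ell$; the constant terms match because $\deg(t_\ell e)=(\ell+1)\deg e$. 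The prime $p$ is then handled by the relation $T_p+W_p=0$ recorded above.

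For the isomorphism over $\Q$, since the pairing is positive definite and the $t_m$ are self-adjoint they are simultaneously diagonalizable, so $\mathbb{T}\otimes\Q$ is \'etale, say $\mathbb{T}\otimes\Q\cong\prod_\alpha K_\alpha$ indexed by Hecke eigensystems, and correspondingly $\X\otimes\Q=\bigoplus_\alpha V_\alpha$. The decisive input is Eichler's theorem that the Brandt module realizes each eigensystem of $M_2(\Gamma_0(p))$ exactly once --- equivalently, $\X\otimes\Q$ is free of rank one over $\mathbb{T}\otimes\Q$, so each $V_\alpha$ is one-dimensional over $K_\alpha$. Granting this, $\X\otimes_\mathbb{T}\X\otimes\Q=\bigoplus_\alpha V_\alpha\otimes_{K_\alpha}V_\alpha$ is again free of rank one, of total dimension $n=g+1=\dim M_2(\Gamma_0(p))$. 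As $\phi$ is $\mathbb{T}$-linear it maps the line $V_\alpha\otimes_{K_\alpha}V_\alpha$ into the $\alpha$-isotypic line of $M_2\otimes\Q$, and choosing an eigenvector $v_\alpha\in V_\alpha$ the $q^1$-coefficient of $\phi(v_\alpha,v_\alpha)$ is $\langle t_1v_\alpha,v_\alpha\rangle=\langle v_\alpha,v_\alpha\rangle\neq0$ by positive-definiteness; hence $\phi$ is nonzero, and therefore an isomorphism, on each line, so $\phi\otimes\Q$ is an isomorphism. The main obstacle is precisely this multiplicity-one matching of eigensystems (the content of the Eichler--Jacquet--Langlands correspondence for $B$), which is what forces the dimension count on the two sides to agree; the remainder is bookkeeping with theta series and Hecke relations.
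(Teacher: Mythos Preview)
The paper does not supply its own proof of this proposition; it is simply quoted as \cite[Proposition~5.6]{Gross}. Your sketch is essentially a correct recapitulation of Gross's argument there: identify $\phi(e_i,e_j)=\tfrac12\theta_{ij}$ as half a quaternary theta series (hence in $M_2(\Gamma_0(p))$ by classical Eichler--Brandt theory), check $\mathbb{T}$-linearity via self-adjointness of the $t_m$ and the Brandt-matrix multiplication law, and obtain the isomorphism over $\Q$ from Eichler's multiplicity-one statement that $\X\otimes\Q$ is free of rank one over $\mathbb{T}\otimes\Q$, together with $\langle v_\alpha,v_\alpha\rangle\neq 0$ to see $\phi$ is nonzero on each eigencomponent. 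So there is no discrepancy to report: you have reproduced the cited proof rather than diverged from it.
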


Now we can define 
\[v_E=\sum v_E(e_i)e_i\in \X \] to be an eigenvector for all $t_m$ corresponding to $f_E$, i.e. $t_m v_E= a(m) v_E$, where $f_E(\tau)=\sum_{m=1}^\infty a(m)q^m$. We normalize $v_E$ (up to the sign) such that the greatest common divisor of all its entries is $1$. The key observation is that $v_E$ has the same eigenvalues modulo $p$ as $F_{E}$.

The rule \[ \overline{F_{E}}= F_{E}/E_{p+1}  \in {S}(\overline{\F},0,1) \] can be evaluated at supersingular elliptic curves over
 $\overline{\F}$ (it has weight zero), and by duality, it defines an element $\overline{F_{E}}^{*} \in \overline{\mathcal{X}}$, where $\overline{\X}$ is the reduction modulo $p$ of $\X$. 
 
 \begin{proposition} \label{prop:sameeigenvalues}
 $\overline{F_{E}}^{*}=\sum  \overline{F_{E}}( e_i) e^{*}_{i}= \sum \overline{F_{E}}( e_i)(1/w_{i}) e_{i}$ and $v_{E}=\sum  {v_{E}}( e_i) e_{i}$ have the same eigenvalues modulo $p$ for the Hecke operators $T_{\ell}$ ($\ell \neq p$).
 
 \end{proposition}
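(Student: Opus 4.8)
The plan is to identify both $\overline{F_E}^*$ and $v_E$ as eigenvectors of the same family of Hecke operators acting on (the reduction modulo $p$ of) the module $\X$, and then to show the two families of eigenvalues coincide modulo $p$. First I would recall that by Proposition \ref{prop:eigmodp} the weight-zero rule $\overline{F_E} \in S(\overline{\F},0,1)$ is an eigenform for all $T_\ell$ with $\ell \neq p$, having the same eigenvalues as $f_E$ modulo $p$. The task is therefore to transport this eigenform property across the duality $S(\overline{\F},0,1) \cong \overline{\X}^*$ and compare with the eigenvector $v_E$, which is defined on the Brandt-matrix side by $t_m v_E = a(m) v_E$.

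The central step is to verify that the duality intertwining the Hecke action on $S(\overline{\F},0,1)$ with the Hecke action on $\overline{\X}$ is compatible, i.e. that the Hecke operator $T_\ell$ on rules corresponds, under $g \mapsto g^* = \sum g(e_i) e_i^*$, to the transpose Brandt operator $t_\ell$ on divisors. Concretely, one expands $(\overline{F_E}\mid_{T_\ell})(E_i,\omega) = \frac{1}{\ell}\sum_C \overline{F_E}(E_i/C, \pi_{C*}\omega)$ from Definition \ref{def:Hecke}, and observes that the $\ell+1$ quotients $E_i/C$ run, with the correct multiplicities, over the supersingular curves $E_j$ counted by the Brandt matrix entry $B_{ij}(\ell)$ (here using $\ell \neq p$, so all these quotients are again supersingular). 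This is exactly the statement that the action of $T_\ell$ on rules is dual to the action of $t_\ell$ on $\X$ with respect to the pairing $\langle e_i, e_j\rangle = w_i \delta_{i,j}$; the factor $1/w_i$ appearing in $e_i^* = e_i/w_i$ is precisely what makes the number-of-subgroups count match the pairing normalization. Since $\overline{F_E}$ is a $T_\ell$-eigenform with eigenvalue $\bar a(\ell)$, its dual $\overline{F_E}^*$ is then a $t_\ell$-eigenvector in $\overline{\X}$ with the same eigenvalue modulo $p$.

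On the other side, $v_E$ satisfies $t_\ell v_E = a(\ell) v_E$ by definition, so reducing modulo $p$ gives $\overline{t_\ell}\,\overline{v_E} = \bar a(\ell)\,\overline{v_E}$ in $\overline{\X}$. Thus both $\overline{F_E}^*$ and $\overline{v_E}$ are eigenvectors of every $\overline{t_\ell}$ ($\ell \neq p$) with eigenvalue $\bar a(\ell)$, which is the assertion. The main obstacle I expect is the bookkeeping in the second step: one must check that the isogeny-theoretic count in Definition \ref{def:Hecke} genuinely reproduces the Brandt matrix $B(\ell)$ and its transpose on the correct side, keeping careful track of the automorphism weights $w_i$ and the normalization factor $1/\ell$, so that the eigenvalues match on the nose rather than up to a unit. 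This compatibility is essentially the content of Gross's identification of the Brandt matrices with the action of Hecke operators on supersingular modules (see \cite{Gross}), and once it is in place the comparison of eigenvalues is immediate.
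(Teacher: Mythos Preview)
Your approach is the same as the paper's, but you misread Proposition~\ref{prop:eigmodp}: it places $\overline{F_E}$ in the \emph{Tate-twisted} module $S(\overline{\F},0,1)[1]$, not in $S(\overline{\F},0,1)$ itself. Under the untwisted weight-zero Hecke operator of Definition~\ref{def:Hecke}, the eigenvalue of $\overline{F_E}$ is $a(\ell)/\ell$, not $a(\ell)$ (this is the content of Proposition~\ref{prop:Robert}: $\ell\,E_{p+1}\cdot(g\!\mid_{T_\ell}) = (g\cdot E_{p+1})\!\mid_{T_\ell}$). Correspondingly, when you pass to the dual, the weight-zero $T_\ell$ does not match $t_\ell$ on the nose but rather $\tfrac{1}{\ell}t_\ell$, exactly because of the $\tfrac{1}{\ell}$ in Definition~\ref{def:Hecke} that you flag as a bookkeeping concern. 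These two factors of $\ell$ cancel, which is why the conclusion is correct; but your intermediate assertion ``$\overline{F_E}$ is a $T_\ell$-eigenform with eigenvalue $\bar a(\ell)$'' is false as stated, and your claim that $T_\ell$ on rules is dual to $t_\ell$ on $\X$ is likewise off by $\ell$. The paper's proof makes exactly this point: ``$t_\ell$ and the action of $T_\ell$ on $S(\overline{\F},0,1)$ differ by precisely this factor $\ell$.'' Once you insert the twist explicitly, your argument is complete and coincides with the paper's.
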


\begin{proof}
 By Proposition \ref{prop:eigmodp}, $\overline{F_{E}}$ has the same eigenvalues as $F_{E}$ for $T_\ell$ ($\ell \neq p$), but with the action twisted.  
Note that $t_{\ell}$ and the action of $T_{\ell}$ on $S(\overline{\F},0,1)$ differ by precisely this factor $\ell$, therefore the result follows
since $v_{E}$ has the same eigenvalues modulo $p$ as $F_E$ and the pairing is Hecke-linear.
\end{proof}

%

\begin{corollary} \label{coro:last}
$\overline{F_{E}}( e_i) \equiv 0 \bmod{p}  \iff  {v_{E}}(e_i) \equiv 0  \bmod{p}$.
\end{corollary}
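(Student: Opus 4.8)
The plan is to show that, modulo $p$, the divisor $\overline{F_E}^{*}$ and the reduction $\overline{v_E}$ of $v_E$ span the same line in $\overline{\X}$, after which the equivalence follows by comparing coordinates. First I would dispose of the weights $w_i$: since $p\ge 5$, each $w_i=\tfrac12\#\mathrm{Aut}(E_i)$ lies in $\{1,2,3\}$ and is thus a unit modulo $p$, so in $\overline{F_E}^{*}=\sum_i \overline{F_E}(e_i)\,w_i^{-1}e_i$ the $e_i$-coordinate vanishes modulo $p$ if and only if $\overline{F_E}(e_i)\equiv 0\bmod p$. Hence it suffices to prove that $\overline{F_E}^{*}$ and $\overline{v_E}$ have exactly the same vanishing coordinates in $\overline{\X}$.

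Next I would argue proportionality. By Proposition \ref{prop:sameeigenvalues} both $\overline{F_E}^{*}$ and $\overline{v_E}$ are Hecke eigenvectors in $\overline{\X}$ with identical eigenvalues for every $T_\ell$ with $\ell\neq p$. Appealing to multiplicity one for $\X$ as a Hecke module — the eigenspace attached to the eigensystem of $f_E$ being one-dimensional over $\overline{\F}$ — I would conclude that $\overline{F_E}^{*}=c\,\overline{v_E}$ for some $c\in\overline{\F}$.

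It then remains to rule out $c=0$, for which I would check that both vectors are nonzero. The reduction $\overline{v_E}$ is nonzero because $v_E$ was normalized to have coprime entries. For $\overline{F_E}^{*}$ it suffices that $\overline{F_E}\neq 0$ in $S(\overline{\F},0,1)$: the form $F_E$ is not $\equiv 0\bmod p$ (its coefficient of $q$ is $a(1)=1$), the restriction map $M(\overline{\F},p+1,1)\to S(\overline{\F},p+1,1)$ is injective since its kernel is $E_{p-1}\cdot M(\overline{\F},2,1)$ and $M(\overline{\F},2,1)=0$ for $p\ge 5$ (being a reduction of $M_2(\SL_2(\Z))=0$), and $E_{p+1}$ does not vanish on supersingular curves, as recorded before Proposition \ref{prop:zerosmodp2}. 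Therefore $\overline{F_E}^{*}\neq 0$, which forces $c\neq 0$; combined with the first paragraph this gives $\overline{F_E}(e_i)\equiv 0\bmod p\iff v_E(e_i)\equiv 0\bmod p$.

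The main obstacle is the proportionality step. Equality of Hecke eigenvalues modulo $p$ does not by itself force two eigenvectors to be proportional, because a congruence between $f_E$ and another eigenform could enlarge the mod-$p$ eigenspace beyond dimension one; over $\Q$ the one-dimensionality is classical, and the genuine issue is purely modular. I would address this by localizing the Hecke algebra at the maximal ideal $\mathfrak{m}$ cut out by the reduction of the eigensystem of $f_E$ and using that $\X$ is free of rank one over this localization (the Gorenstein/multiplicity-one property of the supersingular module, as in Emerton and Gross); alternatively, the one-dimensionality is automatic whenever the residual eigensystem is non-Eisenstein.
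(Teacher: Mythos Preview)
Your proposal is correct and follows essentially the same route as the paper: invoke Proposition \ref{prop:sameeigenvalues} for equality of Hecke eigenvalues, then appeal to Emerton's multiplicity-one result for $\X$ modulo $p$ to deduce proportionality, and finally observe that the $w_i$ are units mod $p$. You are in fact slightly more careful than the paper in explicitly verifying that both $\overline{F_E}^{*}$ and $\overline{v_E}$ are nonzero (via the injectivity of $M(\overline{\F},p+1,1)\to S(\overline{\F},p+1,1)$ and the gcd normalization of $v_E$), a point the paper absorbs into the phrase ``up to a non-zero scaling.''
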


\begin{proof}
The forms $\overline{F_{E}}^{*}=\sum \overline{F_{E}}( e_i)(1/w_{i}) e_{i}$ and 
$v_{E}=\sum  {v_{E}}( e_i) e_{i}$ have the same eigenvalues
for $T_{\ell}$  ($\ell \neq p$) by Proposition \ref{prop:sameeigenvalues}. By 
the work of Emerton \cite[Theorem ~0.5 and Theorem ~1.14]{Emerton} we 
have the multiplicity one property for $\X$ modulo $p$, since $p$ is a prime 
different from $2$. 

Therefore, up to a non-zero scaling, the coefficients of these two quaternion 
modular forms agree modulo $p$. Finally, noting that the $w_{i}$ are not 
divisible by $p$,
the result follows.
\end{proof}

Now we are in position to prove Theorem \ref{thm:zeros}.
\begin{proof}[Proof of Theorem \ref{thm:zeros}]
 \[ {v_{E}}( e_i) \equiv 0 \iff  \overline{F_{E}}( e_i)=0  \iff  F(F_{E},j(E_{i})) \equiv 0 .   \]
 The first equivalence is Corollary \ref{coro:last}; the last one is 
Proposition \ref{prop:zerosmodp2}.
\end{proof}

Let $S_p \subset \{1,\ldots, n\}$ be a subset of indices such that $i \in S_p$ if and only if $j(E_i)\in \F$ (hence $\#S_p=s_p$). For $i \in \{1,\ldots, n\}$ let $\bar{i}$ be the unique element of $\{1,\ldots, n\}$  such that $E_i^p \cong E_{\bar{i}}$. Note that, $\bar{i}=i$ if and only if $i \in S_p$.

\begin{proposition}[\cite{Gross}, Proposition ~2.4] \label{prop:tp}
The Hecke operator $t_{p}$ induces an involution on $\X$ which satisfies that for every $1 \le i \le n$

\[t_p e_{i}=e_{\bar{i}}  .\]

\end{proposition}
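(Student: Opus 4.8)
The plan is to compute the Brandt matrix $B(p)$ entrywise, exploiting the fact that a supersingular elliptic curve carries a unique subgroup scheme of order $p$. Since $t_p e_i=\sum_j B_{ij}(p)e_j$ with $B_{ij}(p)$ equal to the number of order-$p$ subgroup schemes $C\subset E_i$ with $E_i/C\cong E_j$, it suffices to prove that each $E_i$ has exactly one such $C$ and that the resulting quotient is $E_{\bar i}$; the involution property will then follow from Frobenius bookkeeping on $j$-invariants.

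First I would examine the $p$-torsion group scheme of a supersingular curve. Over $\overline{\F}$ the scheme $E_i[p]$ is local-local of order $p^2$ and has $a$-number one, so $\dim_{\overline{\F}}\mathrm{Hom}(\alpha_p,E_i)=1$. Any subgroup scheme of order $p$ in $E_i$ is local-local of order $p$, hence isomorphic to $\alpha_p$, and is the image of a nonzero homomorphism $\alpha_p\to E_i$; two such homomorphisms share the same image exactly when they differ by an automorphism of $\alpha_p$, that is by a scalar in $\overline{\F}^\times$. Because the Hom-space is one-dimensional, this forces a single subgroup scheme of order $p$, namely $\ker F$, where $F:E_i\to E_i^{(p)}$ is the relative Frobenius isogeny. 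Consequently $E_i$ has a unique degree-$p$ quotient, $E_i/\ker F\cong E_i^{(p)}\cong E_{\bar i}$, so $B_{ij}(p)=\delta_{j,\bar i}$ and $t_p e_i=e_{\bar i}$.

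It remains to check that $t_p$ is an involution, i.e. that $\bar{\bar i}=i$. Applying the previous step twice identifies $E_{\bar{\bar i}}$ with $(E_i^{(p)})^{(p)}=E_i^{(p^2)}$, and I would invoke the classical theorem of Deuring that every supersingular $j$-invariant lies in $\mathbb{F}_{p^2}$. Since $j\bigl(E_i^{(p^2)}\bigr)=j(E_i)^{p^2}=j(E_i)$, we get $E_i^{(p^2)}\cong E_i$, hence $\bar{\bar i}=i$ and $t_p^2=\mathrm{id}$.

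The main obstacle is the structural input of the second paragraph: establishing that the $a$-number of a supersingular curve is one and converting this into the uniqueness of the order-$p$ subgroup scheme $\ker F$. Once that is in place, identifying the quotient with the Frobenius twist and tracking the Frobenius action on supersingular $j$-invariants is routine.
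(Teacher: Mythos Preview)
Your argument is correct. The paper does not actually prove this proposition; it is quoted from Gross \cite[Proposition~2.4]{Gross} and used as input. Your proof is essentially the standard one: the supersingularity of $E_i$ forces $E_i[p]$ to be local--local with $a$-number one, so $\ker F$ is the unique order-$p$ subgroup scheme, giving $B_{ij}(p)=\delta_{j,\bar i}$ and hence $t_p e_i=e_{\bar i}$; Deuring's theorem that supersingular $j$-invariants lie in $\mathbb{F}_{p^2}$ then yields $t_p^2=\mathrm{id}$. This is exactly the content behind the identity $B_{k\ell}(p)=\delta_{\bar k\,\ell}$ that the paper invokes without proof in the argument for Proposition~\ref{prop:Bij}, so your write-up fills in precisely what the paper outsources to the reference.
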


Now we finish the section with the proof of Theorem \ref{thm:1}.

\begin{proof}[Proof of Theorem \ref{thm:1}]
Let $E_{i}$ be a supersingular elliptic curve with $j(E_{i}) \in \F$. The 
operator $t_{p}$ acts as $-W_{p}$ on $M_2$ and since the elliptic curve has root 
number $-1$ we get that $t_{p}$ acts as $-1$. By Proposition \ref{prop:tp} we 
have that $t_{p}e_i=e_i$, hence ${v_{E}}( e_i)=0$, and the result follows from 
Theorem \ref{thm:zeros}.
\end{proof}
\section{Proof of Theorem \ref{thm:even}} \label{section:mod2}

\subsection{Some basic properties of Brandt matrices}
Following \cite{Gross}, we will recall some useful properties of Brandt matrices. 
Let $B$ be the quaternion algebra over $\Q$ ramified at $p$ and $\infty$. For 
each $i=1,\ldots,n$ let $R_i$ be a maximal order of $B$ such that $R_i \cong 
End(E_i)$. Set $R=R_1$ and  let $\{I_1, \ldots, I_n\}$ be a set of left 
$R$-ideals representing different $R$-ideal classes, with $I_1=R$. We can choose 
the
$I_i$'s such that the right order of $I_i$ is equal to $R_i$. For $1\le i,j \le 
n$, define $M_{ij}=I_j^{-1}I_i$; this is a left $R_i$-module and a right 
$R_j$-module. The Brandt matrix of degree $m$, $B(m)=(B_{ij}(m))_{1\le i,j \le 
n}$, is 
defined by the formula
\[
B_{i,j}(m)=\frac{1}{2 w_j}\# \{ b \in M_{ij}: \frac{\Nr(b)}{\Nr(M_{ij})} = m\},
\]
where $\Nr(b)$ is the reduced norm of $b$, and $\Nr(M_{ij})$ is the unique positive rational number such that the quotients $\frac{\Nr(b)}{\Nr(M_{ij})}$ are all integers with no common factor. 

Alternatively, $M_{ij} \cong Hom_{\overline\F}{(E_i, E_j)}$ and $B_{i,j}(m)$ is equal to the number of subgroup schemes $C$ of order $m$ in $E_i$ such that $E_i/C \simeq E_j$ \cite[Proposition ~2.3]{Gross}. 

Following the discussion before \ref{prop:tp} we can state the following results.

\begin{proposition} \label{prop:sameparity}
We have the equality $v_E(e_j)=\lambda_{p} v_E(e_{\bar{j}})$. In particular, $v_E(e_j)$ and $v_E(e_{\bar{j}})$ have the same parity.
\end{proposition}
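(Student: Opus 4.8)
The plan is to exploit the fact that $v_E$ is a simultaneous Hecke eigenvector together with the explicit description of $t_p$ furnished by Proposition \ref{prop:tp}. First I would record that, since $v_E$ satisfies $t_m v_E = a(m) v_E$ for all $m \ge 1$, it is in particular an eigenvector for $t_p$; write $t_p v_E = \lambda_p v_E$, so that $\lambda_p = a(p)$, the $p$-th Fourier coefficient of $f_E$. The eigenvalue $\lambda_p$ is $\pm 1$: this follows at once from the fact that $t_p$ is an \emph{involution} on $\X$ (Proposition \ref{prop:tp}), which forces $\lambda_p^2 = 1$. (Equivalently, one sees $\lambda_p = \pm 1$ from $T_p + W_p = 0$, so that $t_p$ acts as minus the Atkin--Lehner involution, or from the fact that $f_E$ is a weight two newform of prime level.)

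Next I would compute $t_p v_E$ in two ways. On the one hand it equals $\lambda_p v_E = \lambda_p \sum_{i} v_E(e_i) e_i$. On the other hand, using linearity of $t_p$ and the identity $t_p e_i = e_{\bar{i}}$ of Proposition \ref{prop:tp},
\[
t_p v_E = \sum_{i=1}^n v_E(e_i)\, t_p e_i = \sum_{i=1}^n v_E(e_i)\, e_{\bar{i}}.
\]
Because $i \mapsto \bar{i}$ is an involution of $\{1,\dots,n\}$, I would reindex the last sum by $j = \bar{i}$ (so that $i = \bar{j}$), obtaining $t_p v_E = \sum_{j=1}^n v_E(e_{\bar{j}})\, e_j$.

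Finally I would compare the coefficient of $e_j$ in the two expressions for $t_p v_E$. This gives $\lambda_p v_E(e_j) = v_E(e_{\bar{j}})$ for every $j$, and since $\lambda_p = \lambda_p^{-1}$ this is exactly the asserted equality $v_E(e_j) = \lambda_p v_E(e_{\bar{j}})$. As $\lambda_p = \pm 1$, the two integers $v_E(e_j)$ and $v_E(e_{\bar{j}})$ differ only by a sign, so they have the same absolute value and in particular the same parity, which is the ``in particular'' clause of the statement.

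I do not expect any genuine obstacle here: once Proposition \ref{prop:tp} is available, the argument is pure bookkeeping (applying $t_p$, reindexing by the involution, and matching coefficients). The only point deserving a line of care is the justification that $\lambda_p = \pm 1$, which I would prefer to pin down through the involutivity of $t_p$ rather than by invoking the arithmetic of newform eigenvalues, since that keeps the proof self-contained within the results already established in this section.
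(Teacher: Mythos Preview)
Your argument is correct and is exactly the approach the paper takes: use that $v_E$ is a $t_p$-eigenvector together with $t_p e_i = e_{\bar i}$ from Proposition~\ref{prop:tp}, compare coefficients, and invoke $\lambda_p=\pm 1$ (from the involutivity of $t_p$) for the parity statement. The paper's proof is just a terse version of what you wrote.
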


\begin{proof}
The first assertion follows from the fact that $\sum_{i} v_E(e_i)e_{i}$ is an eigenvector for the action of $t_p$ and Proposition \ref{prop:tp}. The last assertion follows from the fact that $\lambda_p= \pm 1$.
\end{proof}

\begin{proposition}\label{prop:Bij}

For all $i,j \in \{1, \ldots, n \}$ and $m\in \N$, we have
$$B_{ij}(m)=B_{\bar{i}\bar{j}}(m).$$ 

\end{proposition}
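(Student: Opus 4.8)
The plan is to prove the identity $B_{ij}(m)=B_{\bar{i}\bar{j}}(m)$ by exploiting the Frobenius morphism, which induces the bar involution on the set of supersingular curves. Recall that $\bar{i}$ is defined by $E_i^p \cong E_{\bar{i}}$, where $E_i^p$ denotes the base-change of $E_i$ along the $p$-power Frobenius of $\overline{\F}$. The key observation is that the $p$-power Frobenius is an automorphism of $\overline{\F}$, and applying it to all the data in sight must preserve the combinatorial quantity being counted.

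I would use the geometric description of the Brandt matrix from the excerpt: $B_{ij}(m)$ equals the number of subgroup schemes $C$ of order $m$ in $E_i$ such that $E_i/C \cong E_j$. First I would observe that the Frobenius twisting operation $E \mapsto E^{(p)}$ is functorial: it sends a subgroup scheme $C \subset E_i$ of order $m$ to a subgroup scheme $C^{(p)} \subset E_i^{(p)}$ of order $m$, and it commutes with the formation of quotients, so that $(E_i/C)^{(p)} \cong E_i^{(p)}/C^{(p)}$. Applying this twist therefore sets up a bijection between the set of order-$m$ subgroups $C \subset E_i$ with $E_i/C \cong E_j$ and the set of order-$m$ subgroups $C' \subset E_i^{(p)} \cong E_{\bar{i}}$ with $E_{\bar{i}}/C' \cong E_j^{(p)} \cong E_{\bar{j}}$. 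Since the cardinalities of these two finite sets are exactly $B_{ij}(m)$ and $B_{\bar{i}\bar{j}}(m)$ respectively, the equality follows.

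Alternatively, and perhaps more cleanly at the level of the quaternion-arithmetic definition, I would argue via the module description $M_{ij} \cong \Hom_{\overline{\F}}(E_i, E_j)$. The Frobenius twist induces an isomorphism of $\Z$-modules $\Hom_{\overline{\F}}(E_i, E_j) \to \Hom_{\overline{\F}}(E_{\bar{i}}, E_{\bar{j}})$ that preserves the reduced norm (degree of an isogeny), because twisting an isogeny by Frobenius does not change its degree. This isomorphism respects the counting in the formula $B_{i,j}(m)=\frac{1}{2w_j}\#\{b \in M_{ij} : \Nr(b)/\Nr(M_{ij}) = m\}$, provided one checks that $\Nr(M_{ij}) = \Nr(M_{\bar{i}\bar{j}})$ and that $w_j = w_{\bar{j}}$. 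The latter holds because $E_j$ and $E_{\bar{j}} = E_j^{(p)}$ have isomorphic automorphism groups (Frobenius twist is an equivalence of categories on supersingular curves), so $w_j = \frac{1}{2}\#\Aut(E_j) = \frac{1}{2}\#\Aut(E_{\bar{j}}) = w_{\bar{j}}$.

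The main obstacle, which is really only a bookkeeping point, is to verify carefully that the Frobenius-twist bijection genuinely preserves all the normalizing data — in particular that the normalizing norm $\Nr(M_{ij})$ is unchanged under the twist so that the condition $\Nr(b)/\Nr(M_{ij}) = m$ transports correctly. The geometric formulation sidesteps this entirely, since there $m$ is literally the order of the subgroup scheme and no norm normalization intervenes, so I would favor the geometric argument and relegate the module-theoretic verification to a remark. Everything else is a direct consequence of the functoriality of the Frobenius twist on the category of supersingular elliptic curves over $\overline{\F}$.
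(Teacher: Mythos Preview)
Your proof is correct, but it takes a genuinely different route from the paper's. The paper argues purely at the level of Brandt matrices: it uses the commutativity $B(m)B(p)=B(p)B(m)$ together with the explicit description $B_{k\ell}(p)=\delta_{\bar{k}\ell}$ from Proposition~\ref{prop:tp}; unwinding the matrix identity $\sum_k B_{\bar{i}k}(p)B_{kj}(m)=\sum_k B_{\bar{i}k}(m)B_{kj}(p)$ immediately gives $B_{ij}(m)=B_{\bar{i}\bar{j}}(m)$. Your argument is instead a direct categorical one: the $p$-power Frobenius of $\overline{\F}$ is a field automorphism, so base change along it is an autoequivalence of the category of supersingular elliptic curves, and this visibly carries the set counted by $B_{ij}(m)$ bijectively to the set counted by $B_{\bar{i}\bar{j}}(m)$. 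Your approach is more conceptual and self-contained (it does not invoke commutativity of the Hecke algebra or Proposition~\ref{prop:tp}), while the paper's approach has the virtue of staying entirely within the Brandt-matrix formalism already set up and of making explicit that the identity is just a shadow of $t_p$ being a Hecke-equivariant involution. As you note, the geometric version with subgroup schemes avoids the bookkeeping about $\Nr(M_{ij})$ and $w_j$; those checks in your alternative module-theoretic version are also fine, since Frobenius twist preserves degrees and automorphism groups.
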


\begin{proof} 
For any $m$ we have that, since the Brandt matrices commute, $B(m)B(p)=B(p)B(m)$. In other words, 
\[\sum_k B_{\bar{i}k}(p) B_{kj}(m)= \sum_k B_{\bar{i}k}(m) B_{kj}(p).\]

Using Proposition \ref{prop:tp} we know that $B_{k \ell}(p)=\delta_{ \bar{k} \ell}$, in consequence we have

\[ B_{ij}(m)= B_{\bar{i} \bar{j}}(m)  ,\]
as we wanted. 
\end{proof}

\begin{proposition}\label{prop:even} Let $l\ne p$ be an odd prime such that $\left(\frac{-p}{l}\right)= -1$.
Then for all $i,j \in S_p$,
\[
B_{ij}(l) \equiv 0 \pmod{2}.
\]
\end{proposition}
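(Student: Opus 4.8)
The plan is to produce a fixed-point-free involution on the set of order-$l$ subgroups counted by $B_{ij}(l)$, which forces that count to be even. I will use the geometric description of the Brandt matrix recalled above: by \cite[Proposition ~2.3]{Gross}, $B_{ij}(l)$ equals the cardinality of
\[
T_{ij}=\{\, C\subset E_i : \#C=l,\ E_i/C\cong E_j\,\}.
\]
Since $l\ne p$, each such $C$ is \'etale, hence a line in $E_i[l]\cong(\Z/l\Z)^2$; there are $l+1$ order-$l$ subgroups in total, and $\mathrm{End}(E_i)$ acts on them through its action on $E_i[l]$.

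The involution comes from Frobenius. Because $i\in S_p$, the curve $E_i$ is supersingular and defined over $\F$, so its $p$-power Frobenius is an endomorphism $\pi\in\mathrm{End}(E_i)$ with $\pi^2=-p$ (the trace $a_p$ vanishes for a supersingular curve over $\F$, as $p\ge 5$). Since $\gcd(l,p)=1$, $\pi$ acts invertibly on $E_i[l]$, hence permutes the $l+1$ order-$l$ subgroups, and $\pi^2=-p$ acts on $E_i[l]$ as the scalar $-p\in(\Z/l\Z)^\times$, which fixes every line. Therefore $\pi$ induces an \emph{involution} on the set of order-$l$ subgroups of $E_i$. Next I check that this involution preserves $T_{ij}$: given $C\in T_{ij}$ with quotient isogeny $q\colon E_i\to E_i/C\cong E_j$, applying the Frobenius functor and then the isomorphisms $E_i^{(p)}\cong E_i$ and $E_j^{(p)}\cong E_j$ (which exist exactly because $\bar i=i$ and $\bar j=j$, see Proposition \ref{prop:tp}) yields an isogeny $E_i\to E_j$ whose kernel is $\pi(C)$; hence $E_i/\pi(C)\cong E_j$, so $\pi(C)\in T_{ij}$ and $\pi$ restricts to an involution of $T_{ij}$.

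Finally I rule out fixed points using the hypothesis $\left(\frac{-p}{l}\right)=-1$. A subgroup $C$ is $\pi$-fixed if and only if it is a $\pi$-invariant line, i.e. an eigenline of $\pi$ acting on $E_i[l]\cong\mathbb{F}_{l}^{2}$. The characteristic polynomial of $\pi$ on $E_i[l]$ is the reduction modulo $l$ of $x^2-a_p x+p=x^2+p$, which has a root in $\mathbb{F}_{l}$ precisely when $-p$ is a square modulo $l$, that is, when $\left(\frac{-p}{l}\right)=1$. Under our hypothesis this polynomial is irreducible over $\mathbb{F}_{l}$, so $\pi$ has no invariant line and the involution on $T_{ij}$ is fixed-point-free. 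Consequently $T_{ij}$ splits into orbits of size $2$, and $B_{ij}(l)=\#T_{ij}$ is even.

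The main obstacle I anticipate is the bookkeeping in the middle step: verifying that Frobenius genuinely carries $T_{ij}$ into itself uses both $i,j\in S_p$ together with the functoriality of the $p$-power Frobenius and the non-canonical isomorphisms $E_i^{(p)}\cong E_i$, $E_j^{(p)}\cong E_j$. One must confirm that the endomorphism $\pi$ (Frobenius followed by the chosen isomorphism $E_i^{(p)}\cong E_i$) is the \emph{same} map on subgroups as the one produced by transporting $C^{(p)}$ back to $E_i$, so that the fixed-point analysis via the characteristic polynomial $x^2+p$ legitimately governs $T_{ij}$ rather than merely the full set of $l+1$ subgroups. Once this compatibility is pinned down, the parity conclusion is immediate.
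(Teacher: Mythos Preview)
Your argument is correct and rests on the same core idea as the paper's proof: build a Frobenius-based involution on the objects counted by $B_{ij}(l)$ and use the hypothesis $\left(\frac{-p}{l}\right)=-1$ to rule out fixed points. The difference is in packaging. The paper works on the quaternion side, defining $\Theta(f)=-\frac{1}{p}\phi_j f\phi_i$ on $M_{ij}\cong\mathrm{Hom}(E_i,E_j)$, then analyses the two $\Theta$-eigenspaces, shows their restrictions to $M_{ij}$ are binary lattices of discriminant $-p$ or $-4p$, and deduces that no $b$ of reduced norm $l$ can be a $\Theta$-eigenvector (else $l$ would be represented by such a form); finally it must argue that the $R_j^\times$-orbits of $b$ and $\Theta(b)$ are disjoint. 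You bypass all of this by working directly with the subgroup description of $B_{ij}(l)$: the involution is simply $C\mapsto\pi(C)$, and the fixed-point analysis reduces to the irreducibility of $x^2+p$ over $\mathbb{F}_l$. Under the correspondence $f\leftrightarrow\ker f$, your involution and the paper's $\Theta$ agree, so the two proofs are really the same argument seen from opposite ends; yours is the shorter route because it avoids the unit-orbit bookkeeping and the binary-form discriminant computation.

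Your closing worry is not a genuine obstacle. Since $i,j\in S_p$ means $j(E_i),j(E_j)\in\F$, you may (and should) choose the representatives $E_i,E_j$ to be defined over $\F$; then $E_i^{(p)}=E_i$ and $E_j^{(p)}=E_j$ literally, the $p$-power Frobenius is an honest endomorphism $\pi$ of $E_i$ with $\pi^2=-p$, and for any order-$l$ subgroup $C\subset E_i$ one has $(E_i/C)^{(p)}=E_i/C^{(p)}=E_i/\pi(C)$, so $E_i/C\cong E_j$ implies $E_i/\pi(C)\cong E_j^{(p)}=E_j$. No noncanonical identifications intervene, and the characteristic-polynomial computation governs exactly the involution on $T_{ij}$.
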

\begin{proof}
Let $\phi_i\in R_i \cong End(E_i)$ and $\phi_j \in R_j\cong End(E_j)$ be the Frobenius endomorphisms of the elliptic curves $E_i$ and $E_j$ respectively  (they exist since $E_i \cong E_i^{p}$ and $E_j \cong E_j^{p}$). These are trace zero elements of reduced norm $p$, i.e. $\phi_i^2=\phi_j^2=-p$. Consider the map $\Theta: B \rightarrow B$ given by
\[
\Theta(f) = \frac{-1}{p}\phi_j f \phi_i.
\]
Note that $\Theta^2=Id$, and $\Nr(\Theta(f))=\Nr(f)$.

First we prove that $\Theta(M_{ij})\subset M_{ij}$. Take $f\in Hom(E_i, E_j)$ and consider 
\[ g=\phi_j\circ f \circ \phi_i \in Hom(E_i, E_j).\] Since the inseparable degree of $g$ is divisible by $p^2$, it factors as $h\circ [p]$ with $h\in Hom(E_i, E_j)$,
hence $\Theta(f)$ belongs to $Hom(E_i, E_j)$.

Next, we show that $\Theta$ has two eigenspaces $W_-$ and $W_+$ of dimension $2$ with eigenvalues $-1$ and $1$ respectively. We consider two cases:

\begin{itemize}
\item [a)] $i=j$ (i.e. $M_{ij}=R_i$)
	
\noindent Direct calculation shows that the vectors $1$ and $\phi_i$ span the 
eigenspace with eigenvalue $1$. The eigenspace with eigenvalue $-1$ is the 
orthogonal complement of $\phi_i$ in the trace zero subspace $B^{0}$ of $B$ 
(since for $f \in B^{0}$ we have $f \perp \phi_i \iff 
\Nr(f+\phi_i)=\Nr(f)+\Nr(\phi_i) \iff f \hat{\phi_i}+\hat{f} \phi_i=0 \iff 
f\phi_i = -\phi_i f \iff \Theta(f)=-f$).
\item[b)] $i \ne j$
	
\noindent Let $\phi_{ji}:= \phi_j \phi_i$. The matrix representations of $\Theta$ in the invariant subspaces generated by $\{ 1, \phi_{ji}\}$ and $\{ \phi_i, \phi_j \}$ are equal to $\sm 0 {-p} {-1/p} 0$ and $\sm 0 {1} {1} 0$, hence  $\Theta$ has two eigenspaces of dimension $2$ with eigenvalues $-1$ and $1$.
\end{itemize}

For $b \in M_{ij}$ let $w_1\in W_-$ and $w_2 \in W_+$ be such that $b=w_1+w_2$. Then $\Theta(b)=-w_1+w_2\in M_{ij}$, and $2w_1, 2w_2 \in M_{ij}$. Let $V_-=W_- \cap M_{ij}$	and $V_+= W_+ \cap M_{ij}$. Thus 
\[ M_{ij}/(V_- + V_+) \le \Z/2\Z + \Z/2\Z. \] 

In order to prove that $B_{ij}(l)$ is even, it is enough to show that for every $b\in M_{ij}$ such that $\frac{\Nr(b)}{\Nr(M_{ij})} = l$ the set 
\[ C=\{\omega b: \omega \in R_j^\times\}\cup \{\omega \Theta(b): \omega \in R_j^\times \} \] has maximal cardinality $\#C=4w_j$ (note that all elements of $C$ have the same norm.) It is enough to prove that $b$ is not an eigenvector of $\Theta$.

Let $a\in \Z$ be such that $I=aM_{ij}\subset R_j$. If $M^2$ is the index of $I$ in $R_j$, then $q_I(x):=\frac{\Nr(x)}{M}$ is an integral quadratic form on $I$ which is in the same genus as $(R_j,\Nr)$. In particular, $disc(q_I)=p^2$. Moreover, $q(x):=q_I(ax)$ is a quadratic form on $M_{ij}$ for which $q(x)=\frac{\Nr(x)}{\Nr(M_{ij})}$ ($\Nr(M_{ij})=\frac{1}{M}$).
Since $\Theta$ preserves reduced norm, the lattices $V_+$ and $V_-$ are orthogonal with respect to $q$, and $|disc(V_+)disc(V_-)|=|disc(V_+ + V_-)|$. It follows that 
\[disc(V_+), disc(V_-) \in \{-p,-4p \} \] since $M_{ij}/(V_- + V_+) \le \Z/2\Z + \Z/2\Z$ and $q$ is a positive definite form.

Assume that $b$ is an eigenvector of $\Theta$. Then $b \in V_+$ or $b\in V_-$. In any case since $l=q(b)$, it follows that $l$ is representable by a binary quadratic form of discriminant $-p$ or $-4p$ which is not possible since 
$\left(\frac{-p}{l}\right)=\left(\frac{-4p}{l}\right)= -1$.

\end{proof}

\subsection{Fourier coefficients of $f_E(\tau)$ mod $2$}
\begin{proposition}\label{prop:ell} Let $E/\Q$ be an elliptic curve of prime conductor $p$ such that $E$ has positive discriminant and $E$ has no rational point of order $2$. There is a positive proportion of odd primes $\ell$ such that $\left(\frac{-p}{\ell}\right)= -1$ and $a(\ell)\equiv 1 \pmod{2}$, where $f_E(\tau)=\sum a(n)q^n$ is the $q$-expansion of $f_E(\tau)$. 
\end{proposition}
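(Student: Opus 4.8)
The plan is to reinterpret both conditions in terms of the mod $2$ Galois representation
\[
\bar\rho: \Gal(\overline{\Q}/\Q) \to \GL_2(\mathbb{F}_2) \cong S_3
\]
attached to $E[2]$, and then apply the Chebotarev density theorem. First recall that for $\ell \ne 2,p$ one has $a(\ell) \equiv \tr\bar\rho(\mathrm{Frob}_\ell) \pmod 2$. A direct inspection of the six elements of $\GL_2(\mathbb{F}_2)$ shows that the trace equals $1$ precisely on the two elements of order $3$ (the $3$-cycles under the isomorphism with $S_3$) and equals $0$ on the identity and the three involutions. Hence $a(\ell)$ is odd if and only if $\mathrm{Frob}_\ell$ maps to an element of order $3$ in the image of $\bar\rho$.

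Next I would bring in the two hypotheses. Since $E$ has no rational point of order $2$, the representation $\bar\rho$ has no nonzero fixed vector, which rules out the trivial image and the order-$2$ subgroups; thus the image is either $A_3 \cong \Z/3\Z$ or all of $S_3$, and in either case it contains elements of order $3$. On the other hand, $K := \Q(E[2])$ is the splitting field of the $2$-division cubic of $E$; since $E$ has positive discriminant this cubic has three real roots, so $K$ is a Galois subfield of $\R$ and is therefore totally real.

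I would then set $L := \Q(\sqrt{-p})$ and consider the compositum $KL$. Because $L$ is imaginary while $K$ is totally real we cannot have $L \subseteq K$, and as $[L:\Q]=2$ this forces $K \cap L = \Q$; hence $K$ and $L$ are linearly disjoint and
\[
\Gal(KL/\Q) \cong \Gal(K/\Q) \times \Gal(L/\Q).
\]
Writing $\Gal(L/\Q) = \{1,\tau\}$ with $\tau$ nontrivial and choosing any element $\rho$ of order $3$ in $\Gal(K/\Q)$, the pair $(\rho,\tau)$ restricts to an order-$3$ element on $K$ and to $\tau$ on $L$. By the Chebotarev density theorem, a positive proportion of primes $\ell$ have $\mathrm{Frob}_\ell$ in the conjugacy class of $(\rho,\tau)$. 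For every such $\ell$ the restriction to $K$ has order $3$, so $a(\ell)$ is odd by the first step, while the restriction to $L$ is $\tau$, which means $\ell$ is inert in $\Q(\sqrt{-p})$, i.e. $\leg{-p}{\ell} = -1$. This produces the desired positive proportion.

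The one genuinely delicate point is the linear disjointness of $K$ and $L$, which is exactly where the positive-discriminant hypothesis enters: it guarantees that $K$ is totally real and hence cannot contain the imaginary field $\Q(\sqrt{-p})$, so that the two Frobenius conditions can be imposed independently. The no-$2$-torsion hypothesis is used only to ensure that order-$3$ elements are present in the image at all. Once both are in place, the remainder is a routine application of Chebotarev, and the finitely many excluded primes (the ramified ones and $\ell=2$) do not affect the density.
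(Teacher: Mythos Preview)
Your proof is correct and, in fact, slightly cleaner than the paper's. Both arguments begin the same way: translate $a(\ell)\equiv 1\pmod 2$ into the condition that $\mathrm{Frob}_\ell$ has order $3$ in $\Gal(K/\Q)$ with $K=\Q(E[2])$, and then invoke Chebotarev over a compositum of $K$ with a suitable quadratic field.

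The routes diverge at the next step. The paper first appeals to a result of Serre (prime conductor plus no rational $2$-torsion forces $|\Delta_E|$ not to be a square) to conclude that $\Gal(K/\Q)\cong S_3$, then identifies its unique quadratic subfield as $\Q(\sqrt{p})$ (using positivity of the discriminant), and finally applies Chebotarev to $\Q(\sqrt{-1})\cdot K$, so that ``$\ell\equiv 3\pmod 4$ and $\ell$ split in $\Q(\sqrt{p})$'' encodes $\leg{-p}{\ell}=-1$. You instead bypass the $S_3$-versus-$A_3$ dichotomy entirely: positivity of the discriminant already tells you $K$ is totally real, hence linearly disjoint from the imaginary field $\Q(\sqrt{-p})$, and a single Chebotarev application to $K\cdot\Q(\sqrt{-p})$ finishes. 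This avoids the citation to Serre and handles both possible images uniformly; the paper's detour through $\Q(\sqrt{p})$ and $\Q(\sqrt{-1})$ buys nothing extra here.
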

\begin{proof}
Denote by $\rho_2:\Gal(\overline{\Q}/\Q) \rightarrow \GL_2(\mathbb{F}_2)$ the 
mod $2$ Galois representation attached to the elliptic curve $E$ (or 
equivalently, by the modularity theorem, to the modular form $f_E$). For an odd 
prime $\ell\ne p$, we have that 
\[ a(\ell) \equiv \Tr(\rho_2(Frob_{\ell}))\bmod{2}, \] 
where $Frob_\ell$ is a Frobenius element over $\ell$. The group 
$\GL_2(\mathbb{F}_2)$ is isomorphic to $S_3$, and the elements of trace $1$ are 
exactly the elements of order $3$. $\rho_2$ factors through $\Gal(K/\Q)$, and 
$\Gal(K/\Q) \cong \Im(\rho_2)$ where $K=\Q(E[2])$. It is enough to prove that 
there is a positive proportion of prime numbers $\ell$ such that  
$\left(\frac{-p}{\ell}\right)= -1$ and $Frob_\ell\in \Gal(K/\Q)$ has order $3$. 
Since $E$ has no rational point of order $2$, $Gal(K/\Q)$ is either $\Z/3\Z$ 
(if the discriminant of $E$ is a square) or $S_3$. 
Moreover, since $E$ has prime conductor and no rational two torsion, it follows 
from Proposition $7$ in  \cite{Serre} that the absolute value 
of the discriminant 
is not a square. Hence, $K/\Q$ is an $S_3$ extension, and since the discriminant 
is positive and its only prime divisor can be $p$, the quadratic field $F$ 
contained in $K$ is equal to $\Q(\sqrt{p})$.

If $\ell \equiv 3 \pmod{4}$ then  $\left(\frac{-p}{\ell}\right)= -1$ implies 
that $\ell$ splits in $F$. If, in addition, $\ell$ does not split completely in 
$K$, then the order of $Frob_\ell$ is $3$ and $a(\ell)$ is odd. There is a 
positive proportion of such primes $\ell$ since by Chebotarev density theorem 
(applied to the field $L=\Q(\sqrt{-1})K$) there is a positive 
proportion of primes $\ell$ which are inert in $\Q(\sqrt{-1})$, split in 
$F$ and do not split completely in $K$. 

\end{proof}

\subsection{Proof of Theorem \ref{thm:even}}

\begin{proof}
Take $\ell$ an odd prime such that $\left(\frac{-p}{\ell}\right)= -1$ and 
$a(\ell)\equiv 1 \pmod{2}$ as in Proposition \ref{prop:ell}. Consider the action 
of $t_{\ell}$ on $\sum_i v_E(e_i)e_i$. Take any $j \in S_p$, that is 
$\bar{j}=j$. By comparing the coefficient of $e_j$ in the equation 
$t_{\ell}\sum_i v_E(e_i)e_i= \lambda_{\ell} (\sum_i v_E(e_i)e_i)$ we obtain

\[ \lambda_{\ell}v_E(e_{j})= \sum_i v_E(e_i)B_{ij}(\ell)  .\]
 We are going to look at this equation modulo $2$; we know that $\lambda_{\ell}=\ell+1-a_{\ell}$ is odd and we know by Proposition \ref{prop:even} that for any $i \in S_{p}$, $B_{ij}(\ell)$ is even. Therefore,
 
 \[  v_E(e_j) \equiv \sum_{i \not \in S_p} v_E(e_i) B_{ij}(\ell) \bmod{2}  . \]
 
 Proposition \ref{prop:Bij} tells us that 
$B_{ij}(\ell)=B_{\bar{i}\bar{j}}(\ell)=B_{\bar{i}j}(\ell)$ as $j=\bar{j}$. 
Moreover, by Proposition \ref{prop:sameparity}, the numbers $v_E(e_{i})$ and 
$v_E(e_{\bar{i}})$ have the same parity. Therefore, rearranging the elements of 
the sum $\sum_{i \not \in S_p} v_E(e_i) B_{ij}(\ell)$ in conjugated pairs, we 
obtain that this sum is zero modulo $2$. In conclusion we must have 
$v_E(e_j) \equiv 0 \bmod{2}$, as we wanted to prove.
\end{proof}

\bigskip
We are going to give a different proof of Theorem \ref{thm:even} under the 
additional assumption that $E$ is supersingular at $2$. 
The idea is to use the results of \cite{LeH} on level raising modulo $2$ 
together with the multiplicity one mod $2$ results from \cite{Emerton} to obtain 
mod $2$ congruences between modular forms of the same level $p$, but with 
different signs of the Atkin-Lehner involution. We hope that by extending these 
ideas to level $2^r p$ one will be able to understand Conjecture \ref{conj:1} 
better. 

\begin{theorem}
Let $E$ be a rational elliptic curve of conductor $p$, without rational $2$-torsion and with positive discriminant. Suppose further that $E$ is supersingular
at $2$ . Then, there exists a newform $g \in S_{2}(\Gamma_{0}(p))$ and a prime $\lambda$ above two in the field
of coefficients of $g$ such that $f \equiv g \bmod{\lambda}$ and such that $W_{p}$ acts as $-1$ on $g$.
\end{theorem}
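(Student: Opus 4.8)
The plan is to produce a mod-$2$ congruence between $f_E$ and a newform of the same level $p$ but with opposite Atkin--Lehner sign, using a level-raising (or rather sign-switching) argument at the prime $2$. First I would record the starting data: since $E$ has no rational $2$-torsion and positive (non-square, by Serre's Proposition $7$) discriminant, the mod-$2$ representation $\rho_2\colon \Gal(\overline{\Q}/\Q)\to \GL_2(\mathbb{F}_2)\cong S_3$ is surjective, so it is irreducible and its field cut out is the $S_3$-extension $K=\Q(E[2])$ with quadratic resolvent $\Q(\sqrt{p})$. The supersingularity of $E$ at $2$ tells us that $a_2(E)\equiv 0 \pmod 2$, i.e. $\Tr(\rho_2(\mathrm{Frob}_2))=0$ and $\rho_2$ is ramified in a controlled way at $2$; this is precisely the local condition at $2$ that the level-raising results of \cite{LeH} require in order to realize $\bar\rho_2$ on a form whose level has been modified at $2$ while the Atkin--Lehner eigenvalue at $p$ is flipped.

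Next I would invoke the mod-$2$ level-raising theorem of \cite{LeH} applied to the irreducible representation $\bar\rho_2$. The point is that $\bar\rho_2$ arises from $f_E\in S_2(\Gamma_0(p))$, on which $W_p$ acts by the sign dictated by the root number (here $+1$ since we are in the root number $1$ regime of Conjecture \ref{conj:1}); the level-raising input at $2$, guaranteed by supersingularity, produces a new system of Hecke eigenvalues congruent to that of $f_E$ modulo a prime $\lambda \mid 2$ but lying in a space where the sign of $W_p$ is reversed. To transfer this back to the \emph{same} level $p$, I would use the multiplicity-one mod $2$ statement from \cite{Emerton} (Theorems $0.5$ and $1.14$, already cited in the proof of Corollary \ref{coro:last}): because $p\neq 2$, the reduction $\overline{\X}$ of the supersingular module is a multiplicity-one Hecke module mod $2$, so the eigensystem $\bar\rho_2$ occurs in $\overline{\X}$ with a one-dimensional eigenspace, and the two Atkin--Lehner eigenspaces of $t_p=\pm W_p$ decompose $\overline{\X}$ compatibly. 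The congruence furnished by \cite{LeH} then forces the existence of a genuine newform $g\in S_2(\Gamma_0(p))$ with $f_E\equiv g\pmod\lambda$ and $W_p g=-g$.

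Concretely, the congruence assembled above lives a priori at some auxiliary level divisible by $2$; the multiplicity-one result lets me descend it to level exactly $p$ by matching Hecke eigenvalues away from $2p$ inside $\overline{\X}$ and observing that the $W_p=-1$ subspace is nonzero and carries the reduced eigensystem. I would then extract $g$ as a newform generating that eigensystem and let $\lambda$ be the prime above $2$ in its coefficient field through which the congruence $f_E\equiv g$ factors.

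The hard part will be the interface between the global level-raising statement of \cite{LeH} and the combinatorial/geometric multiplicity-one statement of \cite{Emerton}: one must check that the local condition at $2$ extracted from supersingularity is exactly the hypothesis under which \cite{LeH} permits raising the level and flipping the $W_p$-sign, and that the resulting eigensystem does \emph{not} collapse back into the original $W_p=+1$ eigenform after reduction mod $\lambda$ (otherwise the conclusion is vacuous). In other words, the delicate point is ensuring that the sign-flipped form is genuinely distinct from $f_E$ yet congruent to it mod $2$, which rests on the non-degeneracy of the Atkin--Lehner action on $\overline{\X}$ and on a careful translation of ``$E$ supersingular at $2$'' into the ramification datum that \cite{LeH} consumes. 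Everything else — irreducibility of $\bar\rho_2$, identification of the resolvent field, application of multiplicity one — I expect to be routine given the results already cited in the excerpt.
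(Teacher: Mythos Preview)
Your core idea---verify that $\bar\rho_2$ is surjective with quadratic resolvent $\Q(\sqrt{p})$, use supersingularity at $2$ as a local condition, and invoke the level-raising theorem of \cite{LeH}---is exactly what the paper does. But you over-engineer the application in two ways.

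First, the detour through an auxiliary level divisible by $2$ followed by a descent via Emerton's multiplicity one is not needed and is not what the paper does. The paper applies \cite[Theorem~2.9]{LeH} in the degenerate case where the set of level-raising primes is \emph{empty}; the output is then already a newform at level exactly $p$, and one simply reads off the desired $g$. Emerton's multiplicity-one result plays no role in the proof of \emph{this} theorem; it only enters in the subsequent paragraph, where the congruence $f_E\equiv g\pmod\lambda$ is used to deduce the parity of $v_E(e_i)$.

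Second, you have the roles of the hypotheses slightly scrambled. In the paper's reading of \cite{LeH}, supersingularity at $2$ is used to ensure that $\rho_2|_{G_{\Q_2}}$ is non-trivial (a standing technical hypothesis of the theorem), not as the mechanism that flips the Atkin--Lehner sign. The ability to \emph{prescribe} the $W_p$-eigenvalue comes instead from the positive discriminant: since the quadratic subfield of $\Q(E[2])$ is $\Q(\sqrt{p})$ (and in particular not $\Q(i)$), the representation is not induced from $\Q(i)$ and the sign at $p$ is a free parameter in the conclusion of \cite[Theorem~2.9]{LeH}. Your worry that $g$ might ``collapse back'' to $f_E$ is also unfounded: if $W_p f_E=+f_E$ then any $g$ with $W_p g=-g$ is automatically distinct from $f_E$, and if $W_p f_E=-f_E$ then $g=f_E$ already satisfies the conclusion.
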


\begin{proof}
We will verify the assumptions of \cite[Theorem ~2.9]{LeH}, starting with our elliptic curve $E$ of prime conductor and in the scenario where we choose
no primes as level raising primes (so we are looking for a congruence between level $p$ newforms).
As we explained before, the hypotheses imply that ${\rho}_{2}: G_{\mathbb{Q}} 
\rightarrow Gl_{2}(\mathbb{F}_{2})$ is surjective and the only quadratic 
extension of $\mathbb{Q}(E[2])$ is given by $\mathbb{Q}(\sqrt{p})$. Therefore, 
the conductor of ${\rho}_{2}$ is $p$  and it is not induced
from $\mathbb{Q}(i)$. Moreover ${\rho}_{2}$ restricted to $G_{\mathbb{Q}_{2}}$ 
is not trivial if  $E$ is supersingular at $2$. Thus, we are in position to 
use the theorem and find a $g$ as in the statement, because, since $\Delta(E)>0$, we can prescribe the sign of the Atkin-Lehner involution at $p$.
\end{proof}

Now we are in condition to give another proof of Theorem \ref{thm:even}, under the additional assumption that $E$ is supersingular at $2$. 
Since $g$ has eigenvalue $-1$ for the Atkin-Lehner operator we have that $v_g(e_i)=0$
for every $i \in S_{p}$ by Proposition \ref{prop:tp}. As we did earlier, Theorem 
~0.5 and Theorem ~1.14 in \cite{Emerton} imply, since $E$ is supersingular 
at $2$, that we have multiplicity one mod $2$ in the $f_E$-isotypical component 
in $\X$, therefore $v_E(e_{i})$ is even for $i \in S_{p}$ as we wanted to show.

\section{Further remarks}
Suppose that $E$ is an elliptic curve with root number $+1$ and positive rank. 
By Gross-Zagier-Kolyvagin we must have $L(E,1)=0$ and we can use 
Gross-Waldspurger formula to obtain some relations satisfied by the 
$v_{E}(e_{i})$. More precisely if we take $-D$ a fundamental negative 
discriminant define
\[ b_D= \sum_{i=1}^{n} \frac{h_i(-D)}{u(-D)} e_i  ,\]
where $h_i(-D)$ is the number of optimal embeddings of the order of discriminant 
$-D$ into $End(E_{i})$ modulo conjugation by $End(E_{i})^{\times}$ and
$u(-D)$ is the number of units of the order.
In this scenario, we have Gross-Waldspurger formula (\cite[Proposition ~13.5]{Gross}).

 \begin{proposition} \label{prop:Gross}
 If $-D$ is a fundamental negative discriminant with $ 
\left(\frac{-D}{p}\right)=-1$, then
 
 \[ L(E,1)L(E \otimes \varepsilon_D,1)= \frac{\left(f_E,f_E \right)}{\sqrt{D}} 
\frac{{m_D}^2}{ \langle v_E, v_E \rangle } ,\] 
 
 where $\varepsilon_D$ is the quadratic character associated to $-D$, 
$\left(f_E,f_E \right)$ is the Petersson inner product on $\Gamma_{0}(p)$ and
$m_D= \langle v_E, b_D \rangle$.

\end{proposition}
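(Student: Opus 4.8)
The plan is to treat this as the explicit arithmetic form of Waldspurger's central value formula worked out by Gross, and to assemble its proof from three ingredients: a theta identity turning the combinatorial quantity $m_D=\langle v_E,b_D\rangle$ into a Fourier coefficient of a half-integral weight eigenform, Waldspurger's theorem relating the square of such a coefficient to a twisted central value, and a careful bookkeeping of normalizing constants. Since this is literally \cite[Proposition 13.5]{Gross}, for the paper one would cite it directly; what follows is the structure of the argument I would reconstruct.

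First I would set up the theta machinery attached to $\X$. For each $i$ the reduced norm $\Nr$ makes the trace-zero part $R_i^0$ of the maximal order $R_i\cong End(E_i)$ into a positive definite ternary lattice, and the number of representations of $D$ by this form is, up to the factor $u(-D)$ and a power of $2$, the number $h_i(-D)$ of optimal embeddings of the order of discriminant $-D$ into $R_i$ (the classical correspondence between embeddings and norm-$D$ trace-zero elements, going back to Eichler). Hence the coefficients of $b_D$ are read off from ternary theta series, and the generating function $g_E(\tau)=\sum_D m_D\,q^D=\sum_D\langle v_E,b_D\rangle q^D$, obtained by pairing these theta series against the fixed eigenvector $v_E$, is a cusp form of weight $3/2$ and level $4p$ whose Shimura lift is $f_E$ up to scalar. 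The hypothesis $\left(\frac{-D}{p}\right)=-1$ forces $p$ to be inert in $K=\Q(\sqrt{-D})$, which guarantees that the order of discriminant $-D$ really does embed into the maximal orders $R_i$ of the quaternion algebra ramified at $p$, so that $b_D\ne 0$ and $g_E$ lands in the correct plus-space component.

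Next I would interpret $m_D$ as a toric (Waldspurger) period of the quaternionic automorphic form $v_E$ against the torus attached to $K$; concretely $b_D$ is the image of the CM cycle for the order of discriminant $-D$. Waldspurger's period formula then gives $m_D^2$ proportional to the central value $L(E/K,1)$, and the factorization $L(E/K,1)=L(E,1)\,L(E\otimes\varepsilon_D,1)$ over $K/\Q$ is exactly what produces the product of two $L$-values in the statement. The proportionality constant splits into the arithmetic normalization $\langle v_E,v_E\rangle$ (present because $v_E$ is taken primitive and integral rather than Petersson-normalized) and a factor coming from comparing the Petersson norm of the half-integral weight form $g_E$ with $(f_E,f_E)$ under the Shimura correspondence; together these rearrange into the identity as stated.

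The hard part is pinning down this constant as exactly $(f_E,f_E)/\sqrt{D}$ with no stray powers of $2$, $p$ or $\pi$. This is where the delicate half-integral weight local computations enter: one must evaluate the Petersson-norm ratio in the Shimura correspondence, control the local factor at $2$ coming from the level $4p$, and use $\left(\frac{-D}{p}\right)=-1$ to make the local term at $p$ uniform in $D$ so that it contributes no $D$-dependent constant. All of this is carried out by Gross, and I would cite \cite[Proposition 13.5]{Gross}, emphasizing only that the three steps above --- the optimal-embedding theta identity, Waldspurger's period formula with the base-change factorization, and the Petersson-norm comparison --- are what make the clean constant appear.
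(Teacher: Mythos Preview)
Your proposal is correct and matches the paper's approach: the paper does not prove this proposition at all but simply states it as \cite[Proposition~13.5]{Gross}, exactly as you anticipate. Your sketch of the underlying argument (theta lift of $v_E$ to weight $3/2$, Waldspurger's period formula, and the Petersson-norm bookkeeping) is accurate and more than the paper itself provides.
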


Since  $L(E,1)=0$ we obtain that 
\[m_D=\langle v_E, b_D 
\rangle=0.\] This says that, as we vary throughout all $D$ as in the 
proposition, we obtain some relations that are satisfied by the $v_{E}(e_{i})$ 
that make them more likely to be zero. For example, if we take a fundamental 
discriminant of class number $1$ such that $p$ is inert in that field, then the 
divisor $b_{D}$ is supported in only one $e_{i}$ with $i \in S_{p}$. Since the 
inner product between $b_{D}$ and $v_{E}$ is zero we get that $v_{E}(e_{i})=0$. 
This certainly explains a lot of the vanishing that is occurring in our 
setting, specially considering that the range we are looking into is not very 
large. One could hope to make these heuristics more precise by analyzing  
imaginary quadratic fields with small size compared to the degree of the 
modular parametrization (this measures the norm of $v_E$) and try to 
obtain explicit lower bounds on the number of zeros in this situation.

\end{document}